\documentclass[12pt,dvipsnames]{amsart} 

\usepackage[margin=3cm]{geometry}
\usepackage{color}
\usepackage{tikz-cd}
\usepackage[colorlinks,linkcolor=BrickRed,citecolor=OliveGreen,urlcolor=black!55!blue,hypertexnames=true]{hyperref}
\usepackage[shortlabels]{enumitem}
\usepackage{ amssymb, amsthm, amsmath, bbm }

\newtheorem{theorem}{Theorem}[section]
\newtheorem{corollary}[theorem]{Corollary} 
\newtheorem{lemma}[theorem]{Lemma}
\newtheorem{proposition}[theorem]{Proposition}

\newtheorem{algorithm}[theorem]{Algorithm}

\theoremstyle{definition}
\newtheorem{example}[theorem]{Example}

\definecolor{alizarin}{rgb}{0.82, 0.1, 0.26}

\makeatletter \def\l@subsection{\@tocline{2}{0pt}{1pc}{5pc}{}} \def\l@subsection{\@tocline{2}{0pt}{2pc}{6pc}{}} \makeatother

\DeclareMathOperator{\rk}{rk}

\DeclareMathOperator{\Mat}{Mat}
\DeclareMathOperator{\SL}{SL}
\DeclareMathOperator{\GL}{GL}
\DeclareMathOperator{\UU}{U}
\DeclareMathOperator{\OO}{O}
\DeclareMathOperator{\Hom}{Hom}
\DeclareMathOperator{\End}{End}
\DeclareMathOperator{\rad}{rad}
\DeclareMathOperator{\add}{add}
\newcommand{\tra}{{\rm t}}
\newcommand{\C} {{\mathbb C}}
\newcommand{\Z}{{\mathbb Z}}
\newcommand{\N}{{\mathbb N}}
\newcommand{\kk}{{\mathbbm k}}

\newcommand{\Langle}{\mathop{<}\!}
\newcommand{\Rangle}{\!\mathop{>}}
\newcommand{\px}{\kk\Langle x_1,\dots,x_m\Rangle}

\linespread{1.1}
\setcounter{tocdepth}{2}

\makeatletter
\@namedef{subjclassname@2020}{
	\textup{2020} Mathematics Subject Classification}
\makeatother

\title[Ranks of linear matrix pencils separate similarity orbits]{
Ranks of linear matrix pencils separate \\[.1mm] simultaneous similarity orbits}

\author{Harm Derksen}
\address{Department of Mathematics,
Northeastern University, Boston, MA, USA}
\email{ha.derksen@northeastern.edu}
\thanks{HD was supported by the National Science Foundation grants IIS-1837985 and DMS-2001460.}

\author{Igor Klep}
\address{
Faculty of Mathematics and Physics, 
University of Ljubljana, 
Slovenia}
\email{igor.klep@fmf.uni-lj.si}
\thanks{IK was supported by the Slovenian Research Agency grants J1-2453, N1-0217, J1-3004 and P1-0222.}

\author{Visu Makam}
\address{Radix Trading Europe B. V., Amsterdam, Netherlands}
\email{visu@umich.edu}
\thanks{VM was supported by the University of Melbourne and the National Science Foundation grant CCF-1900460.}

\author{Jurij Vol\v{c}i\v{c}}
\address{
Department of Mathematics, 
Drexel University, 
Pennsylvania, USA}
\email{jurij.volcic@drexel.edu}
\thanks{JV was supported by the National Science Foundation grant DMS-1954709 and the Slovenian Research Agency grant J1-3004.}

\subjclass[2020]{47A56, 15A22, 14L30, 16G10, 47A13}
\date{\today}
\keywords{Simultaneous similarity, orbit equivalence, linear matrix pencil, rank-preserving map, module degeneration}

\begin{document}

\begin{abstract}
This paper solves the two-sided version and provides a counterexample to the
general version of the 2003 conjecture by Hadwin and Larson.
Consider evaluations of linear matrix pencils
$L=T_0+x_1T_1+\cdots+x_mT_m$
on matrix tuples as
$L(X_1,\dots,X_m)=I\otimes T_0+X_1\otimes T_1+\cdots+X_m\otimes T_m$.
It is shown that ranks of linear matrix pencils constitute a collection of separating invariants for simultaneous similarity of matrix tuples.
That is, $m$-tuples $A$ and $B$ of $n\times n$ matrices are simultaneously similar if and only if $\rk L(A)=\rk L(B)$ for all linear matrix pencils $L$ of size $mn$.
Variants of this property are also established for symplectic, orthogonal, unitary similarity,
and for the left-right action of general linear groups.
Furthermore, a polynomial time algorithm for orbit equivalence of matrix tuples under the left-right action of special linear groups is deduced.
\end{abstract}

\maketitle

{
\vspace{-2mm}
\linespread{0.95}\small
\tableofcontents
}

\section{Introduction}

Two tuples of $n \times n$ matrices $A = (A_1,\dots,A_m)$ and $B = (B_1,\dots,B_m)$ over a field are {\bf (simultaneously) similar} or conjugate if there exists $P\in \GL_n$ such that $B_i=PA_iP^{-1}$ for $i=1,\dots,m$.
The classification of matrix tuples up to similarity has been deemed a ``hopeless problem'' \cite{LB}.
Nevertheless, the study of simultaneous similarity and related group actions on matrix tuples 
is crucial in multiple areas of mathematics,
ranging from 
operator theory \cite{Fri,DKS},
invariant and representation theory \cite{Dro,Pro} and
algebraic geometry \cite{EH88,LBR} to 
algebraic statistics \cite{AKRS21, DM21} and 
computational complexity \cite{GGOW16, DM17, IQS}. 
As one would expect, this allows for many perspectives in studying matrix tuples and the transfer of ideas across disciplines can be especially fruitful. This paper embodies this spirit -- we leverage results in representation theory to obtain significant results in operator theory and computational complexity. 
Notably, we settle the Hadwin--Larson conjecture \cite{HL} from operator theory, and deduce a polynomial time algorithm for the orbit equivalence of the left-right action which is of interest to complexity theorists, invariant theorists, and algebraic statisticians alike.

A prominent facet of simultaneous similarity 
is finding a (natural) collection of separating invariants.
Note that continuous invariants cannot separate similarity orbits (see e.g. \cite{Pro}).\footnote{As a comparison: it is well-known \cite{Pro} that traces of products of matrices and their complex conjugates form a collection of separating invariants for simultaneous unitary similarity.} 
If an orbit is not closed, any continuous invariant function is forced to take the same value on the entire closure of the orbit, so it is unable to separate orbits whose closures intersect. Indeed, a seminal result of Mumford \cite[Theorem 1.1]{GIT} is that continuous (or even polynomial) invariants 
capture orbit closure intersection: 
the orbit closures of two tuples $A$ and $B$ do not intersect if and only if there is a polynomial invariant $p$ that separates them, i.e., $p(A) \neq p(B)$. A related question is that of the orbit closure inclusion: when is $A$ contained in the closure of the similarity orbit of $B$? 
It is well-known that $A$ and $B$ are similar if and only if $A$ is in the orbit closure of $B$ and $B$ is in the orbit closure of $A$.
Surprisingly, these orbit problems, i.e., orbit equivalence, orbit closure intersection, and orbit closure inclusion have deep connections to central problems in complexity theory, which was unearthed by Mulmuley and Sohoni's Geometric Complexity Theory (GCT) program \cite{MS01, Mul17}. In particular, the VP vs VNP conjecture (an algebraic analog of the celebrated P vs NP conjecture) can be reformulated as the permanent vs determinant problem, the main problem for the GCT approach and manifestly an orbit closure inclusion problem. 

In 1985, Curto and Herrero conjectured \cite[Conjecture 8.14]{CH} that $A$ lies in the closure of the similarity orbit of $B$ if and only if $\rk f(A)\le \rk f(B)$ for every noncommutative polynomial $f$ in $m$ variables. 
Hadwin and Larson in 2003 gave a counterexample \cite[Example 5]{HL} to the (even weaker) two-sided Curto--Herrero conjecture: they presented matrix tuples $A$ and $B$ that are not similar but $\rk f(A)=\rk f(B)$ for every noncommutative polynomial $f$. 
Furthermore, they proposed an ameliorated conjecture \cite[Conjecture 2]{HL}: $A$ lies in the closure of the similarity orbit of $B$ if and only if $\rk F(A)\le \rk F(B)$ for every \emph{matrix noncommutative polynomial} $F$ (i.e., a matrix of noncommutative polynomials).

In this paper we prove the two-sided version of the Hadwin--Larson conjecture, and provide a counterexample to its general version. 
Moreover, we show that only affine linear matrix noncommutative polynomials $F$,
called linear matrix pencils,
of certain size are required for testing rank equality in the two-sided version of the  
conjecture.
\begin{theorem}\label{t:main}
The following are equivalent for $A,B\in \Mat_{n}^m$:
\begin{enumerate}[\rm(i)]
	\item $A$ and $B$ are similar;
	\item for every $T=(T_0,\dots,T_m)\in \Mat_{mn}^{m+1}$,
\begin{equation}\label{e:rank}
\rk\left(I\otimes T_0 + A_1\otimes T_1+ \cdots A_m\otimes T_m\right) =
	\rk\left(I\otimes T_0 + B_1\otimes T_1+ \cdots B_m\otimes T_m\right).
\end{equation}
\end{enumerate}
\end{theorem}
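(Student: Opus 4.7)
The direction (i)$\Rightarrow$(ii) is immediate: if $B_i=PA_iP^{-1}$, then $(P\otimes I_{mn})L(A)(P^{-1}\otimes I_{mn})=L(B)$, so the ranks agree.

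For (ii)$\Rightarrow$(i) I propose a module-theoretic strategy. Each tuple $A\in\Mat_n^m$ defines an $n$-dimensional module $V_A$ over the free algebra $\mathcal{F}=\px$, with simultaneous similarity of $A$ and $B$ corresponding to isomorphism $V_A\cong V_B$. A pencil $L=T_0+\sum x_iT_i$ with $T_i\in\Mat_{mn}$ is a morphism of free $\mathcal{F}$-modules $\mathcal{F}^{mn}\to\mathcal{F}^{mn}$; setting $M_L:=\operatorname{coker}(L)$, a direct computation using the identification $V_A\otimes_{\mathcal{F}}\mathcal{F}^{mn}\cong V_A^{mn}$ shows that $V_A\otimes_{\mathcal{F}} L$ is exactly $L(A)$, yielding the fundamental formula
$$\dim_{\kk}(V_A\otimes_{\mathcal{F}} M_L) = mn^2-\rk L(A).$$
Hence (ii) is equivalent to the equality of these tensor-product dimensions against the cokernel of every $mn\times mn$ linear pencil.

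Next, every $\mathcal{F}$-module $X$ of dimension $d\leq n$ admits the natural presentation $\mathcal{F}^{md}\to\mathcal{F}^d\to X\to 0$ given by a $d\times md$ linear matrix $L_0$ (this is where the bound $mn$ is tight: it matches the column count $md$ when $d=n$). Padding $L_0$ with zero rows and columns to size $mn\times mn$ yields a pencil $L$ with $M_L\cong X\oplus\mathcal{F}^{mn-d}$ and $\rk L(A)=\rk L_0(A)$; subtracting the free-module contribution $\dim(V_A\otimes_{\mathcal{F}}\mathcal{F}^{mn-d})=(mn-d)n$ from the formula above recovers $\dim(V_A\otimes_{\mathcal{F}} X)=dn-\rk L_0(A)$. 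Hence (ii) forces $\dim(V_A\otimes_{\mathcal{F}} X)=\dim(V_B\otimes_{\mathcal{F}} X)$ for every $\mathcal{F}$-module $X$ with $\dim X\leq n$.

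Finally, the category of finite-dimensional $\mathcal{F}$-modules is Krull--Schmidt, so decompose $V_A=\bigoplus X_i^{a_i}$ and $V_B=\bigoplus X_i^{b_i}$ over a common finite list of pairwise non-isomorphic indecomposables, each of dimension $\leq n$. Evaluating the previous step at $X=X_j$ produces the linear system $\sum_i(a_i-b_i)\dim(X_i\otimes_{\mathcal{F}} X_j)=0$ for each $j$, and an Auslander-type nondegeneracy of this ``Cartan-like'' pairing matrix between distinct indecomposables forces $a_i=b_i$ for all $i$, i.e.\ $V_A\cong V_B$. I expect this nondegeneracy claim to be the main technical obstacle: classical over finite-dimensional algebras via Auslander--Reiten theory, it must here be verified inside the Krull--Schmidt subcategory of finite-dimensional modules over the infinite-dimensional, representation-wild algebra $\mathcal{F}$.
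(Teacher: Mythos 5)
Your direction (i)$\Rightarrow$(ii) and your ``dictionary'' steps are fine: up to left/right module conventions, your identity $\dim(V_A\otimes_{\mathcal F}X)=dn-\rk L_0(A)$ for the standard linear presentation of a $d$-dimensional module $X=M_C$ is just the dual formulation of the paper's Lemma \ref{l:m2t} (which phrases the same invariant as $\dim\Hom(M_C,M_A)=\dim\ker$ of the stacked pencil $(I\otimes A_i-C_i^\tra\otimes I)_i$), and the zero-padding argument correctly shows that condition (ii) forces this invariant to agree for $A$ and $B$ against every test module of dimension at most $n$, with pencils of size $mn$. The gap is your final step, and it is not a technicality but the entire content of the theorem. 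You propose to conclude from the equalities at $X=X_j$, where the $X_j$ run only over the indecomposable \emph{direct summands} of $V_A\oplus V_B$, via an unproven ``nondegeneracy'' of the matrix $(\dim\Hom(X_j,X_i))_{i,j}$ (equivalently of your tensor pairing). No such nonsingularity statement for an arbitrary finite family of indecomposables is available, and the known results do not reduce the test set that far: Auslander's theorem \cite[Proposition 1.5]{Aus82} requires testing against \emph{all} finite-dimensional modules, and the Forbregd--Nornes--Smal{\o} refinement used in the paper (Proposition \ref{p:fns}) shrinks the test set to indecomposable \emph{submodules} of $M_A$ or $M_B$, obtained from the radical series $L_{i+1}=\rad(\End L_i)\cdot L_i$ of $L_0=M_A\oplus M_B$ --- a strictly larger class than the direct summands you allow yourself. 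So as written the argument does not close.

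Moreover, the obstacle you flag --- that these results are ``classical over finite-dimensional algebras'' but must be reproved over the wild, infinite-dimensional algebra $\px$ --- dissolves once you use the trick you are missing: replace $\px$ by the finite-dimensional algebra $\Lambda\subset\Mat_{2n}$ generated by $A_1\oplus B_1,\dots,A_m\oplus B_m$. The modules $M_A,M_B$ are $\Lambda$-modules, $\Hom_\Lambda=\Hom_{\px}$ for modules on which the action factors through $\Lambda$, and Auslander plus the FNS refinement then produce, when $A\not\sim B$, a test module $X$ that is an indecomposable submodule of $M_A$ or $M_B$, hence of dimension at most $n$ and again of the form $M_C$. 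Feeding that $C$ into your steps 1--2 (or into \eqref{e:c2t}) yields the size-$mn$ pencil witnessing rank disparity. In short: your reduction to ``equal invariants against all $X$ with $\dim X\le n$'' matches the paper, but the separation step must be done over the finite-dimensional algebra $\Lambda$ with the Auslander/FNS machinery, not by a hoped-for nondegeneracy over $\px$ restricted to direct summands.
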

In other words, ranks of linear matrix pencils evaluated at matrix tuples constitute a collection of separating invariants for simultaneous similarity. 
Theorem \ref{t:main} (or rather Theorem \ref{t:leftright} below addressing the left-right multiplication by invertible matrices) also classifies completely rank-preserving maps \cite{Mol,CH1,CH2,HHY}.
This aspect fits under the broader consideration of linear maps preserving various nonlinear properties, such as (complete) positivity.
Furthermore, ranks of linear matrix pencils play an important role in free real algebraic geometry; for example, pencils with same singularity sets are described by noncommutative Nullstellens\"atze \cite{KV,HKV0,HKV}, and low-rank values of a hermitian pencils correspond to extreme points of free spectrahedra \cite{EH}.
Ranks of matrix noncommutative polynomials also pertain to distributions of noncommutative rational functions in free probability \cite{ACSY}.

The proof of Theorem \ref{t:main} is given in Section \ref{sec3}. 
It relies on representation theory of finite-dimensional algebras \cite{Aus82,FNS} and matricization of homomorphisms between finite-dimensional modules.
Section \ref{sec4} gives an analog of Theorem \ref{t:main} for symplectic and orthogonal similarity over an algebraically closed field, and strengthens Theorem \ref{t:main} for unitary and orthogonal similarity over a real closed field. 
In Section \ref{sec5} we first derive a rank condition compatible with the left-right action of general linear groups on matrix tuples (Theorem \ref{t:leftright});
then we present a reduction of the orbit equivalence under the left-right action of special linear groups 
to that of general linear groups (Corollary \ref{c:lam}).
Section \ref{sec6} shows that the general Hadwin--Larson conjecture fails; 
the detailed counterexample is based on an example due to Jon F. Carlson arising from degenerations of modules \cite{Rie,Bon1,Zwa,Sma}.
Finally, algorithmic aspects of our results are collected in Section \ref{sec7}; in particular, we give a polynomial time algorithm for $\SL_p\times \SL_q$ equivalence of matrix tuples (Algorithm \ref{a:sl}).

\subsection*{Acknowledgment}
The authors thank the American Institute of Mathematics for hosting the workshop \emph{Noncommutative inequalities} in June 2021 where this work was initiated.

\section{Preliminaries}\label{sec2}

Throughout the paper let $\kk$ be the underlying field of scalars (without any additional assumptions unless stated otherwise). By $\Mat_{p,q}$ we denote the space of $p\times q$ matrices over $\kk$; for square matrices we write $\Mat_p=\Mat_{p,p}$. 
Given $X\in\Mat_{p,q}^m$ and $P\in\Mat_p$, $Q\in\Mat_q$ we write $PXQ=(PX_1Q,\dots,PX_mQ)$. 
For $i=1,\dots,m$ let $e_i$ denote the column vector with $m$ coordinates that has a $1$ in the $i$\textsuperscript{th} entry and $0$s elsewhere.

Let $\px$ be the free algebra of noncommutative polynomials over $\kk$ in the letters $x_1,\dots,x_m$. While the Hadwin--Larson conjecture \cite[Conjecture 2]{HL} concerns ranks of evaluations of arbitrary matrices over the free algebra, the following proposition shows that it suffices to consider only affine linear matrices over the free algebra.

\begin{proposition}\label{p:matpoly}
For every $F\in \Mat_{d}\otimes\px$ there exists $T=(T_0,\dots,T_m)\in \Mat_{d'}^{m+1}$ such that
\begin{equation}\label{e:lin}
\rk F(A)=\rk \left(I\otimes T_0 + A_1\otimes T_1+ \cdots A_m\otimes T_m\right)-(d'-d)n
\end{equation}
for all $A\in\Mat_{n}^m$ and $n\in\N$.
\end{proposition}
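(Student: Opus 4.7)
The plan is to apply the classical Higman linearization trick to express $F$ as an affine linear pencil up to multiplication by matrices that are invertible over $\px$. First I would reduce the rank statement to an algebraic equivalence over the free algebra: if $L = T_0 + \sum_{i=1}^m T_i x_i \in \Mat_{d'}\otimes\px$ is affine linear and $P, Q \in \Mat_{d'}\otimes\px$ are invertible matrices over $\px$ (that is, their inverses again have entries in $\px$) with
$$P\,L\,Q \;=\; \begin{pmatrix} F & 0 \\ 0 & I_{d'-d} \end{pmatrix},$$
then the evaluations $P(A), Q(A) \in \Mat_{d'n}$ are invertible for every $A \in \Mat_n^m$, so
$$\rk L(A) \;=\; \rk\bigl(P(A)\,L(A)\,Q(A)\bigr) \;=\; \rk F(A) + (d'-d)n,$$
which is exactly \eqref{e:lin}. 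Hence the task reduces to constructing such $L$, $P$, $Q$.

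The construction proceeds by induction on $k := \deg F$. The base case $k \le 1$ is trivial: take $L = F$ and $P = Q = I$. For the inductive step I would invoke the readily verified block identity
$$\begin{pmatrix} I & H \\ 0 & I \end{pmatrix}\begin{pmatrix} G + HK & 0 \\ 0 & I \end{pmatrix}\begin{pmatrix} I & 0 \\ -K & I \end{pmatrix} \;=\; \begin{pmatrix} G & H \\ -K & I \end{pmatrix},$$
whose outer factors are unipotent and therefore invertible over $\Mat\otimes\px$ (with inverses of the same form). Consequently, any decomposition $F = G + HK$ with $G, H, K$ matrices over $\px$ reduces the linearization problem for $F$ to the analogous problem for the enlarged matrix $\widetilde F := \begin{pmatrix} G & H \\ -K & I \end{pmatrix}$. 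Provided $\max(\deg G, \deg H, \deg K) < k$, the new matrix satisfies $\deg\widetilde F < k$ and the induction hypothesis applies; composing the invertible factors produced by successive reductions yields the required $P, Q$.

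The remaining task is routine: produce such a decomposition with all three factors of degree $< k$. Each monomial $c\cdot x_{i_1}\cdots x_{i_\ell}$ of degree $\ell \ge 2$ appearing in an entry of $F$ splits as $(c\,x_{i_1})\cdot(x_{i_2}\cdots x_{i_\ell})$; arranging the left factors as entries of a suitable block row $H$ and the matching right factors as entries of the corresponding block column $K$, while letting $G$ absorb the part of $F$ of degree $\le k-1$, gives $F = G + HK$ with $\deg G, \deg H, \deg K < k$. I expect no genuine obstacle beyond the bookkeeping of how the block size $d'$ grows through the induction; all the substantive content is already packaged in the block identity above.
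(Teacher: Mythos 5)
Your proposal is correct and follows essentially the same route as the paper: the block identity you use is exactly the paper's Higman linearization identity \eqref{e:hig}, applied recursively to reach an affine linear pencil $L$ with $P(F\oplus I_{d'-d})Q=L$ for invertible $P,Q$ over $\px$. Your added details (the degree-induction bookkeeping and the observation that invertible matrices over $\px$ evaluate to invertible matrices, giving the rank shift by $(d'-d)n$) merely spell out what the paper compresses into ``then \eqref{e:lin} clearly holds.''
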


\begin{proof}
Higman's linearization trick \cite[Section 8.5]{Coh} states that
\begin{equation}\label{e:hig}
\begin{pmatrix}
I & f_1 \\ 0 & I
\end{pmatrix}
\begin{pmatrix}
f_0+f_1f_2 & 0 \\ 0 & I
\end{pmatrix}
\begin{pmatrix}
I & 0 \\ -f_2 & I
\end{pmatrix}
=\begin{pmatrix}
f_0 & f_1 \\ -f_2 & I
\end{pmatrix}
\end{equation}
for all matrices $f_0,f_1,f_2$ (over $\px$) of compatible sizes. Applying \eqref{e:hig} recursively we see that
there exists
a linear matrix pencil
$L=T_0+\sum_{i=1}^m T_ix_i \in \Mat_{d'}\otimes\px$ such that
$$P(F\oplus I_{d'-d})Q = L$$
for some invertible $P,Q\in \Mat_{d'}\otimes\px$. Then \eqref{e:lin} clearly holds.
\end{proof}

While well-known to researchers in invariant theory, we state the connection between orbit equivalence and orbit closure inclusion problems for the sake of completeness.

\begin{lemma}\label{l:orbclos}
Let $A,B\in \Mat_{n}^m$. Then $A$ and $B$ are similar if and only if 
$A\in \overline{B^{\GL_n}}$ and $B\in \overline{A^{\GL_n}}$.
\end{lemma}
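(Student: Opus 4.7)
The forward direction is immediate: if $A$ and $B$ are simultaneously similar, then $A\in B^{\GL_n}\subseteq\overline{B^{\GL_n}}$, and symmetrically $B\in\overline{A^{\GL_n}}$.

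For the converse, my plan is to invoke the standard fact from the theory of algebraic group actions: whenever a linear algebraic group $G$ acts morphically on an algebraic variety $V$, every orbit $G\cdot v$ is locally closed, i.e., open in its Zariski closure $\overline{G\cdot v}$, and consequently the boundary $\overline{G\cdot v}\setminus G\cdot v$ is a union of $G$-orbits each of strictly smaller dimension than $G\cdot v$. Specialising to $G=\GL_n$ acting on $V=\Mat_n^m$ by simultaneous conjugation, this yields the following dichotomy: if $A\in\overline{B^{\GL_n}}\setminus B^{\GL_n}$ then $\dim A^{\GL_n}<\dim B^{\GL_n}$, and symmetrically if $B\in\overline{A^{\GL_n}}\setminus A^{\GL_n}$ then $\dim B^{\GL_n}<\dim A^{\GL_n}$. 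Under the hypotheses of the lemma, these two strict inequalities cannot hold simultaneously, so at least one of the inclusions $A\in B^{\GL_n}$ or $B\in A^{\GL_n}$ must hold; in either case $A$ and $B$ are simultaneously similar.

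I do not anticipate any serious obstacle: the core of the argument is a two-line dimension count on top of the locally-closed-orbit theorem. The one minor subtlety worth flagging, which I would view as the main point to handle with care, is that the ``boundary orbits have smaller dimension'' statement naturally takes place over an algebraically closed field. If $\kk$ is not algebraically closed, one reduces to that case by noting that the Zariski closure of an orbit inside $\Mat_n^m$ is cut out by polynomial equations over $\kk$ and is compatible with base change to $\bar\kk$, combined with the classical descent fact (a consequence of the Noether--Deuring theorem) that simultaneous similarity of matrix tuples over $\bar\kk$ already implies simultaneous similarity over $\kk$. This lets the dimension argument be run over $\bar\kk$ while delivering the conclusion about $\GL_n(\kk)$-orbits.
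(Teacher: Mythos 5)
Your proof is correct and takes essentially the same route as the paper: the paper's one-line argument also rests on the locally-closed-orbit theorem (Humphreys, Proposition~8.3), deducing equality of orbits from their openness in the common irreducible closure, whereas you use the strictly-smaller-dimension statement for boundary orbits --- both are parts of the very same cited proposition, so the difference is cosmetic. Your additional care about non-algebraically-closed $\kk$ (base change to $\bar{\kk}$ plus descent of similarity) is sound and addresses a point the paper's proof silently glosses over.
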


\begin{proof}
Let $C\in\Mat_{n}^m$. Then the orbit $C^{\GL_n}$ is Zariski open in $\overline{C^{\GL_n}}$ by \cite[Proposition 8.3]{Hum}, and $\overline{C^{\GL_n}}$ is an irreducible variety (since it is the closure of an image of $\GL_n$). Therefore $A^{\GL_n}=B^{\GL_n}$ is equivalent to $\overline{A^{\GL_n}}=\overline{B^{\GL_n}}$.
\end{proof}

\section{Orbit equivalence under similarity}\label{sec3}

First we consider orbit equivalence for the action of $\GL_n$ on $\Mat_n^m$ by similarity.
In this setting, orbits correspond to isomorphism classes of $n$-dimensional modules over a free algebra.
At the heart of our reasoning is the following theorem of Auslander.

\begin{theorem}[{\cite[Proposition 1.5]{Aus82}}] \label{thm:aus}
Let $\Lambda$ be a finite-dimensional $\kk$-algebra, and let $M$ and $N$ be finite-dimensional $\Lambda$-modules. Then $M\cong N$ if and only if
\begin{equation}\label{e:dimhom}
\dim\Hom(X,M)=\dim\Hom(X,N)
\end{equation}
for all finite-dimensional $\Lambda$-modules $X$.
\end{theorem}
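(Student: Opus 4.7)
The plan is to reduce, via the Krull--Schmidt theorem, to the question of whether the function $X \mapsto \dim_\kk \Hom(X, M)$ determines the multiplicities of indecomposable summands of $M$, and then to extract those multiplicities using the local structure of endomorphism rings together with almost split sequences.

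The forward direction is immediate: any isomorphism $\varphi \colon M \to N$ induces a $\kk$-linear isomorphism $\Hom(X, M) \to \Hom(X, N)$ via post-composition. For the converse, one applies the Krull--Schmidt theorem (valid here because $\End_\Lambda(U)$ is local for every indecomposable $U$) to write
\[
M \cong \bigoplus_i U_i^{a_i}, \qquad N \cong \bigoplus_i U_i^{b_i},
\]
where $\{U_i\}$ is a fixed set of representatives of isomorphism classes of finite-dimensional indecomposable $\Lambda$-modules, and the nonnegative integers $a_i, b_i$ are almost all zero. The task reduces to proving $a_i = b_i$ for every $i$. Applying the hypothesis at $X = U_j$ and expanding by additivity yields the linear system
\[
\sum_i (a_i - b_i)\,\dim_\kk \Hom(U_j, U_i) = 0 \qquad \text{for every } j,
\]
and cancellation of common summands (also a consequence of Krull--Schmidt) further reduces the problem to showing that if the supports of $(a_i)$ and $(b_i)$ are disjoint then $M = N = 0$.

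The core step is to exhibit, for each indecomposable $Y$, a $\Z$-combination of test modules whose Hom-dimensions into $-$ extract the multiplicity of $Y$ as a summand. When $Y$ is non-projective, this is achieved by the almost split (Auslander--Reiten) sequence $0 \to \tau Y \to E \to Y \to 0$: by its defining universal property, every non-split morphism from an indecomposable $X$ to $Y$ factors through $E \to Y$, which translates into the alternating-sum identity
\[
\dim_\kk \Hom(X, \tau Y) - \dim_\kk \Hom(X, E) + \dim_\kk \Hom(X, Y) \;=\; \begin{cases} \dim_\kk D_Y & \text{if } X \cong Y, \\ 0 & \text{otherwise,} \end{cases}
\]
where $D_Y = \End(Y)/\rad \End(Y)$. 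Substituting $M$ (respectively $N$) for $X$ in this identity and using additivity over direct sums isolates $a_Y \dim_\kk D_Y$ (respectively $b_Y \dim_\kk D_Y$), so the hypothesis forces $a_Y = b_Y$. The projective case is handled symmetrically using the short exact sequence $0 \to \rad P_Y \to P_Y \to Y \to 0$ for the projective cover of $Y$, which plays the role of the almost split sequence at a projective indecomposable.

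The main obstacle is the availability of almost split sequences for arbitrary finite-dimensional $\Lambda$-modules; this is the core technical input from Auslander--Reiten theory and is standard for finite-dimensional algebras over a field. Given that input, the rest of the argument is essentially bookkeeping of $\Z$-linear combinations of Hom-space dimensions, together with the cancellation property underlying Krull--Schmidt.
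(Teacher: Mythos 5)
The paper does not prove this statement (it is quoted from Auslander), so I can only assess your argument on its own terms; unfortunately, as written it has a genuine gap: a variance mismatch between your multiplicity-extraction identity and the hypothesis. Your key identity comes from applying $\Hom(X,-)$ to the almost split sequence $0\to\tau Y\to E\to Y\to 0$, and it extracts the multiplicity of $Y$ as a summand of the module sitting in the \emph{first} Hom-argument. After substituting $X=M$ it reads
$\dim\Hom(M,\tau Y)-\dim\Hom(M,E)+\dim\Hom(M,Y)=a_Y\dim_\kk D_Y$,
i.e.\ it computes $a_Y$ from Hom-spaces \emph{out of} $M$. But the hypothesis \eqref{e:dimhom} only equates Hom-spaces \emph{into} $M$ and $N$; it says nothing about $\dim\Hom(M,\tau Y)$ versus $\dim\Hom(N,\tau Y)$, so the step ``the hypothesis forces $a_Y=b_Y$'' does not follow. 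Nor can you silently pass between the two one-sided conditions ($\Hom(-,M)$ versus $\Hom(M,-)$): their equivalence is essentially the theorem itself, so invoking it would be circular.

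The mechanism is fixable by dualizing it. For an indecomposable non-injective $Y$, take the almost split sequence \emph{starting} at $Y$, namely $0\to Y\to E'\to\tau^{-1}Y\to 0$, and apply $\Hom(-,M)$; the left almost split property of $Y\to E'$ gives
$\dim\Hom(Y,M)-\dim\Hom(E',M)+\dim\Hom(\tau^{-1}Y,M)=a_Y\dim_\kk D_Y$,
and now all three terms are of the form $\dim\Hom(\text{test},M)$, so \eqref{e:dimhom} applies. With this orientation the exceptional case is $Y$ \emph{injective} (not projective), handled by the left almost split map $Y\to Y/\mathrm{soc}\,Y$, which yields $\dim\Hom(Y,M)-\dim\Hom(Y/\mathrm{soc}\,Y,M)=a_Y\dim_\kk D_Y$. (Alternatively, keep your identity but first transfer the hypothesis to the opposite algebra via the $\kk$-duality $\Hom_\kk(-,\kk)$; some such step must be added explicitly.) A smaller slip: in your projective case the sequence $0\to\rad P_Y\to P_Y\to Y\to 0$ is not exact when $Y$ is projective (then $P_Y\cong Y$); the right almost split map into a projective indecomposable $Y$ is the inclusion $\rad Y\hookrightarrow Y$ — though with the corrected variance this case disappears and is replaced by the injective one.
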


Here, $\cong$ denotes isomorphism of $\Lambda$-modules, and $\dim$ denotes the dimension of a vector space over $\kk$. 
We shall rely on the following quantitative strengthening of Theorem \ref{thm:aus} established in \cite{FNS}.
Given the setup as in Theorem \ref{thm:aus}, let $L_0 = M\oplus N$ and inductively define $L_{i+1} = \rad(\End_\Lambda L_i)\cdot L_i\subset L_i$. Then $L_{s+1}=\{0\}$ for large enough $s$, and we let $L=\bigoplus_{i=0}^s L_i$.
Let $\add L$ be the smallest subcategory in the category of finitely generated $\Lambda$-modules that contains $L$ and is closed under direct sums and direct summands. 
By \cite[Proposition 5]{FNS}, $M$ and $N$ are isomorphic if and only if \eqref{e:dimhom} holds for all $X\in \add L$.
The construction of $L_i$ is compatible with direct sums \cite[Remark 4]{FNS}; namely, $L_i\cong M_i\oplus N_i$ for some $M_i\subset M$ and $N_i\subset N$. Consequently, every indecomposable direct summand of $L$ is isomorphic to a submodule of $M$ or $N$ by the Krull--Remak--Schmidt theorem \cite[Corollary 19.22]{Lam}.
This leads to the following statement, alluded to in the proof of \cite[Theorem 6]{FNS}.

\begin{proposition}[{\cite{FNS}}] \label{p:fns}
With the setup as in Theorem \ref{thm:aus}, $M\cong N$ if and only if
\begin{equation}\label{e:dimhom1}
\dim\Hom(X,M)=\dim\Hom(X,N)
\end{equation}
for all indecomposable $\Lambda$-submodules $X$ of $M$ or $N$.
\end{proposition}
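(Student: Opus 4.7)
The plan is to deduce Proposition \ref{p:fns} by refining \cite[Proposition 5]{FNS}, which was just recalled and which asserts that $M \cong N$ if and only if \eqref{e:dimhom} holds for all $X \in \add L$. The forward direction is immediate (take $f = \mathrm{id}_X$ to transport homomorphisms along the isomorphism $M \cong N$), so the task reduces to showing that it suffices to test \eqref{e:dimhom1} on indecomposable submodules of $M$ or $N$ rather than on all of $\add L$.

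First I would unpack the direct-sum compatibility \cite[Remark 4]{FNS} highlighted in the paragraph preceding the proposition: iterating $L_{i+1} = \rad(\End_\Lambda L_i)\cdot L_i$ from $L_0 = M \oplus N$ produces $L_i \cong M_i \oplus N_i$ with $M_i \subseteq M$ and $N_i \subseteq N$, so $L = \bigoplus_{i=0}^{s} L_i$ decomposes as a direct sum of submodules of $M$ or $N$. Since $\Lambda$ is finite-dimensional, every module in sight has finite length, so the Krull--Remak--Schmidt theorem \cite[Corollary 19.22]{Lam} applies and each indecomposable direct summand of $L$ is isomorphic to an indecomposable direct summand of some $M_i$ or $N_i$, and thus to an indecomposable submodule of $M$ or of $N$.

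To conclude, any $X \in \add L$ is by construction a direct summand of a finite direct sum of copies of $L$, hence itself a finite direct sum of indecomposable summands of $L$. The functors $\Hom_\Lambda(-, M)$ and $\Hom_\Lambda(-, N)$ are additive in the first argument, so both sides of \eqref{e:dimhom} are additive in $X$. Therefore if \eqref{e:dimhom1} holds for every indecomposable submodule of $M$ or $N$, it propagates via direct sums to \eqref{e:dimhom} for all $X \in \add L$, and \cite[Proposition 5]{FNS} yields $M \cong N$.

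There is no substantial obstacle here: the proposition is a straightforward repackaging of \cite[Proposition 5]{FNS} through the direct-sum compatibility in \cite[Remark 4]{FNS} together with Krull--Remak--Schmidt. The subtlest point worth checking is that the indecomposable summands of $L$ can genuinely be realized as indecomposable \emph{submodules} (and not merely subquotients) of $M$ or $N$, which is precisely what the identification $L_i \cong M_i \oplus N_i$ with $M_i \subseteq M$ and $N_i \subseteq N$ guarantees.
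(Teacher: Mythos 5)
Your argument is correct and follows essentially the same route as the paper, which establishes Proposition \ref{p:fns} in the paragraph preceding its statement: invoke \cite[Proposition 5]{FNS} for $\add L$, use the direct-sum compatibility \cite[Remark 4]{FNS} giving $L_i\cong M_i\oplus N_i$ with $M_i\subset M$, $N_i\subset N$, apply Krull--Remak--Schmidt to realize the indecomposable summands of $L$ as submodules of $M$ or $N$, and conclude by additivity of $\Hom$ in the first argument. The only difference is that you spell out the additivity step that the paper leaves implicit, which is a harmless (indeed helpful) elaboration.
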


\subsection{Proof of Theorem \texorpdfstring{\ref{t:main}}{1.1}}

A tuple $C\in\Mat_{n}^m$ gives rise to a $\px$-module $M_C$, which is the vector space $\kk^n$ with $x_j$ acting on it by matrix multiplication with $C_j$. Conversely, each finite-dimensional $\px$-module is given by a matrix tuple in this way. Note that $M_A$ and $M_B$ are isomorphic as $\px$-modules if and only if $A$ and $B$ are in the same orbit under the similarity action of $\GL_n$.

\begin{lemma}\label{l:m2t}
Let $A\in\Mat_{n}^m$ and $C \in \Mat_{p}^m$. Then $\dim \Hom(M_C,M_A)$ equals the dimension of the kernel of the $mpn\times pn$ matrix  
\begin{equation}\label{e:colmtx}
\begin{pmatrix} I_p\otimes A_1-C_1^\tra\otimes I_n \\ 
 \vdots \\
  I_p\otimes A_m-C_m^\tra\otimes I_n\end{pmatrix}
= \begin{pmatrix} -C_1^\tra \\ \vdots \\ -C_m^\tra \end{pmatrix}\otimes I_n 
+ \sum_{i=1}^m (e_i \otimes I_p)\otimes A_i.
\end{equation}
\end{lemma}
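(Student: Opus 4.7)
The proof is an exercise in vectorization of matrix equations, so the plan is almost forced. First, I would unwind the definition: a $\px$-module homomorphism $\varphi : M_C \to M_A$ is a $\kk$-linear map $\kk^p \to \kk^n$, i.e.\ an $n \times p$ matrix $\Phi$, satisfying the intertwining relations $A_i \Phi = \Phi C_i$ for $i=1,\dots,m$. Hence $\dim \Hom(M_C, M_A)$ equals the dimension of the space of matrices $\Phi \in \Mat_{n,p}$ satisfying this system of $m$ matrix equations.

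Next I would apply the standard identity $\mathrm{vec}(XYZ) = (Z^{\tra} \otimes X)\,\mathrm{vec}(Y)$ to each equation, where $\mathrm{vec}(\Phi) \in \kk^{pn}$ is the column stacking of $\Phi$. Writing $A_i \Phi = A_i \Phi I_p$ and $\Phi C_i = I_n \Phi C_i$, the $i$-th equation $A_i \Phi - \Phi C_i = 0$ becomes
\[
\bigl(I_p \otimes A_i - C_i^{\tra} \otimes I_n\bigr)\, \mathrm{vec}(\Phi) = 0.
\]
Stacking these $m$ equations vertically realizes the simultaneous intertwining condition as a single linear system $M \, \mathrm{vec}(\Phi) = 0$, whose coefficient matrix $M$ is precisely the $mpn \times pn$ matrix displayed on the left-hand side of \eqref{e:colmtx}. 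Since $\Phi \mapsto \mathrm{vec}(\Phi)$ is a linear isomorphism $\Mat_{n,p} \to \kk^{pn}$, we obtain $\dim\Hom(M_C, M_A) = \dim \ker M$.

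Finally, I would verify the reformulation on the right-hand side of \eqref{e:colmtx}. The column $\begin{pmatrix} -C_1^{\tra} \\ \vdots \\ -C_m^{\tra} \end{pmatrix}\otimes I_n$ contributes the block $-C_i^{\tra}\otimes I_n$ in the $i$-th block row, while $(e_i \otimes I_p)\otimes A_i$ has $I_p \otimes A_i$ in the $i$-th block row and zero elsewhere; summing over $i$ and adding the two pieces reproduces $M$ block row by block row. No step here is an obstacle — the whole proof is a direct application of the vec-trick, and the only thing to take care of is the placement of transposes and the identification of block structures on the two sides of \eqref{e:colmtx}.
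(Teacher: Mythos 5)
Your proposal is correct and follows essentially the same route as the paper: identify $\Hom(M_C,M_A)$ with the matrices $\Phi\in\Mat_{n,p}$ satisfying $A_i\Phi=\Phi C_i$ and realize this intertwining condition as the kernel of the stacked linear system, which the paper summarizes as ``writing this linear map in coordinates.'' Your explicit use of the identity $\mathrm{vec}(XYZ)=(Z^{\tra}\otimes X)\,\mathrm{vec}(Y)$ and the block-row check of the right-hand side of \eqref{e:colmtx} merely fills in the details the paper leaves implicit.
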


\begin{proof}
The space $\Hom(M_C,M_A)$ is precisely the set of matrices $P \in \Mat_{n,p}$ such that $PC_i = A_iP$ for all $i$. In other words, it is the kernel of the map $\Mat_{n,p} \rightarrow \Mat_{n,p}^m$ given by $P \mapsto (A_1P - PC_1, A_2P - PC_2,\dots,A_mP - PC_m)$. Writing this linear map in coordinates gives us the matrix of \eqref{e:colmtx}.
\end{proof}

\begin{proof}[Proof of Theorem \ref{t:main}]
(i)$\Rightarrow$(ii) clearly holds,  so we consider (ii)$\Rightarrow$(i).
Suppose that $A,B\in\Mat_{n}^m$ are not in the same $\GL_n$-orbit. Let $\Lambda\subset\Mat_{2n}$ be the unital algebra generated by $A_1\oplus B_1,\dots,A_m\oplus B_m$. Then we can view $M_A$ and $M_B$ as $\Lambda$-modules in a natural way. 
Since they are not isomorphic,
by Proposition \ref{p:fns} there exists a $\Lambda$-module $X$ of dimension at most $n$ such that $\dim\Hom(X,M_A)\neq\dim\Hom(X,M_B)$. As a $\px$-module, $X\cong M_C$ for some $C\in\Mat_{n}^m$.
By Lemma \ref{l:m2t} we have
$$\rk\Big( \begin{pmatrix} -C_1^\tra \\ \vdots \\ -C_m^\tra \end{pmatrix}\otimes I_n 
+ \sum_{i=1}^m (e_i \otimes I_t)\otimes A_i \Big)
\neq
\rk\Big( \begin{pmatrix} -C_1^\tra \\ \vdots \\ -C_m^\tra \end{pmatrix}\otimes I_n 
+ \sum_{i=1}^m (e_i \otimes I_t)\otimes B_i \Big).$$
Thus $T_0,\dots,T_m\in\Mat_{mn}$ defined as
\begin{equation}\label{e:c2t}
T_0=-\sum_{j=1}^m (e_je_1^\tra)\otimes C_j^\tra
\quad\text{and}\quad
T_i=(e_ie_1^\tra)\otimes I_t \quad \text{for }i=1,\dots,m
\end{equation}
satisfy
\[\rk\left(I\otimes T_0 + A_1\otimes T_1+ \cdots A_m\otimes T_m\right) \neq
\rk\left(I\otimes T_0 + B_1\otimes T_1+ \cdots B_m\otimes T_m\right).
\qedhere\]
\end{proof}

An algorithm for constructing a rank-disparity witness $T$ in presence of a non-similar pair of tuples is given in Section \ref{sec71}.

\subsection{A bound independent of \texorpdfstring{$m$}{m}}

We can also replace the bound $mn$ on the size of matrices in Theorem \ref{t:main}(2) with one that that is independent of $m$ and depends only on $n$. For $C = (C_1,\dots,C_m)$ and $I = \{i_1 < i_2 < \dots < i_k\} \subseteq \{1,\dots,m\}$ 
we define $C_I
=(C_{i_1},C_{i_2},\dots,C_{i_k})$.

\begin{lemma}\label{l:nom}
Suppose $A,B \in \Mat_{n}^m$. Then $A$ and $B$ are similar if and only if $A_I$ and $B_I$ are similar for all $I \subseteq \{1,\dots,m\}$ with $|I| \leq n^2 + 1$.
\end{lemma}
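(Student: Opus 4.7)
The plan is to reduce the problem to one with few indices via a dimension count on the space of intertwiners, exploiting that the commutation conditions are linear in the conjugating matrix.

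First I would introduce, for each subset $I \subseteq \{1,\dots,m\}$, the linear subspace
\[
V_I := \{P \in \Mat_n : PA_i = B_i P \text{ for all } i \in I\} \subseteq \Mat_n,
\]
observing (cf.\ Lemma \ref{l:m2t}) that $A_I$ and $B_I$ are simultaneously similar precisely when $V_I \cap \GL_n \neq \emptyset$. Also, $V_I \supseteq V_J$ whenever $I \subseteq J$ and $V_{\{1,\dots,m\}} = \bigcap_{i=1}^m V_{\{i\}}$.

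The key step is a greedy selection. Starting from $J_0 = \emptyset$ (so $V_{J_0} = \Mat_n$ has dimension $n^2$), iteratively adjoin any index $i$ with $V_{J_k \cup \{i\}} \subsetneq V_{J_k}$. Each step drops $\dim V$ by at least one, so the process terminates at some $J \subseteq \{1,\dots,m\}$ with $|J| \leq n^2$. At termination, $V_J \subseteq V_{\{i\}}$ for every $i$, so $V_J \subseteq \bigcap_i V_{\{i\}} = V_{\{1,\dots,m\}}$; the reverse inclusion is automatic, hence $V_J = V_{\{1,\dots,m\}}$.

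The forward direction is immediate. For the converse, apply the hypothesis to $J$ (valid since $|J| \leq n^2 \leq n^2+1$): $A_J$ and $B_J$ are similar, so some invertible $P$ lies in $V_J = V_{\{1,\dots,m\}}$, giving the desired simultaneous similarity of $A$ and $B$. I do not foresee a genuine obstacle; the linearity of the intertwining conditions essentially bakes in the $n^2$-type bound. A minor remark is that this argument in fact gives the slightly sharper bound $|I| \leq n^2$ rather than $n^2+1$. An alternative route through Theorem \ref{t:main}, using that only pencils with few active variables are needed, would also be conceivable but less direct than this linear-algebraic reduction.
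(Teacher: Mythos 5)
Your proof is correct, and it takes a genuinely different route from the paper's. The paper argues via the span of the matrices themselves: it picks indices $i_1,\dots,i_k$ so that $A_{i_1},\dots,A_{i_k}$ form a basis of $\mathrm{span}(A_1,\dots,A_m)$ (so $k\le n^2$), and shows that if $A$ and $B$ are not similar then either this index set already witnesses non-similarity or adjoining one suitable extra index does, via a short contradiction argument using linearity of conjugation --- hence the bound $n^2+1$. You instead run a greedy descent on the intertwiner spaces $V_I=\{P: PA_i=B_iP,\ i\in I\}$ inside $\Mat_n$: each adjoined index strictly drops $\dim V$, so after at most $n^2$ steps you reach $J$ with $V_J=V_{\{1,\dots,m\}}$, and a single application of the hypothesis to $J$ finishes the argument. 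Your version avoids the case split and the proof by contradiction, works directly in the ``if'' direction, is equally field-independent, and indeed yields the slightly sharper bound $|I|\le n^2$ (in fact $n^2-1$ under the hypothesis, since $V_J$ must contain an invertible matrix); the paper's argument, on the other hand, bounds the subset size by the dimension of $\mathrm{span}(A_1,\dots,A_m)$ plus one, which can be much smaller than $n^2$ when the $A_i$ span a low-dimensional space. The only point worth stating explicitly in your write-up is the identity $V_{J\cup\{i\}}=V_J\cap V_{\{i\}}$, which is what converts ``no index drops the dimension'' into $V_J\subseteq V_{\{i\}}$ for all $i$; with that spelled out, the argument is complete.
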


\begin{proof}
Clearly if $A$ and $B$ are similar, then so are $A_I$ and $B_I$ for all $I$. 
Now suppose $A$ and $B$ are not similar. Take a basis $\{A_{i_1}, A_{i_2},\dots,A_{i_k}\}$ of $\mathrm{span} (A_1,\dots,A_m)$. Let $I = \{i_1,i_2,\dots,i_k\}$. Observe that $k \leq n^2$. 
If $A_I$ is not similar to $B_I$, then we are done. 
Otherwise let $P \in \GL_n$ be such that $PA_IP^{-1} = B_I$. Since $A$ is not similar to $B$, we have $PA_{i_{k+1}}P^{-1} \neq B_{i_{k+1}}$ for some $i_{k+1} \notin I$. Let $I' = I \cup \{i_{k+1}\}$. We claim that $A_{I'}$ is not similar to $B_{I'}$. 
Indeed, if it were, then $QA_{I'}Q^{-1} = B_{I'}$ for some $Q \in \GL_n$. Since $A_{i_{k+1}} = \sum_{1 \leq j \leq k} \lambda_j A_{i_j}$ for some $\lambda_j\in\kk$, it follows that
$$B_{i_{k+1}} = QA_{i_{k+1}}Q^{-1} 
= \sum_j \lambda_j QA_{i_j}Q^{-1}
= \sum_j \lambda_jB_j 
= \sum_j \lambda_j PA_{i_j}P^{-1} 
= PA_{i_{k+1}}P^{-1}$$
which is a contradiction. Hence $A_{I'}$ is not similar to $B_{I'}$ and $|I'| \leq k + 1 \leq n^2 + 1$.
\end{proof}

\begin{corollary}
$A,B\in\Mat_n^m$ are similar if and only if \eqref{e:rank} holds for $T\in\Mat_{n^3+n}^{m+1}$.
\end{corollary}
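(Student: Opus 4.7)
The plan is to chain Theorem~\ref{t:main} with Lemma~\ref{l:nom}. Theorem~\ref{t:main} says similarity of a $k$-tuple of $n\times n$ matrices is detected by rank equality on pencils of size $kn$, while Lemma~\ref{l:nom} reduces similarity of the full $m$-tuple to similarity of every subtuple of length at most $n^2+1$. Specialising $k\le n^2+1$ yields the uniform pencil size $(n^2+1)n=n^3+n$; what remains is to check that rank equality for pencils of this fixed size on $A$ and $B$ forces the rank equalities on subtuples that Theorem~\ref{t:main} requires.

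Concretely, I would assume \eqref{e:rank} for every $T\in\Mat_{n^3+n}^{m+1}$, fix a subset $I\subseteq\{1,\dots,m\}$ with $k:=|I|\le n^2+1$, and take an arbitrary pencil $S=(S_0,S_{i_1},\dots,S_{i_k})\in\Mat_{kn}^{k+1}$ witnessing criterion (ii) of Theorem~\ref{t:main} for the pair $A_I,B_I$. I would inflate $S$ to a pencil $T\in\Mat_{n^3+n}^{m+1}$ on the full tuple by setting $T_j=S_j\oplus 0$ (padded to size $n^3+n$) for $j\in I\cup\{0\}$ and $T_j=0$ for $j\in\{1,\dots,m\}\setminus I$. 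A routine simultaneous permutation of rows and columns then identifies $I\otimes T_0+\sum_{j=1}^m A_j\otimes T_j$ with $\bigl(I\otimes S_0+\sum_{\ell=1}^k A_{i_\ell}\otimes S_{i_\ell}\bigr)\oplus 0$, so its rank coincides with the rank of $S$ evaluated on $A_I$; the analogous identity holds verbatim for $B$.

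Hence the hypothesised rank equality for $T$ descends to rank equality for $S$ on the pair $A_I,B_I$. As $S$ was arbitrary, Theorem~\ref{t:main} yields similarity of $A_I$ and $B_I$, and since this holds for every such $I$, Lemma~\ref{l:nom} concludes that $A$ and $B$ are similar; the converse direction is immediate. The only subtlety is the rank-preserving padding, namely verifying that the Kronecker products $A_j\otimes T_j$ respect the block structure of $T_j$ after a suitable reordering of coordinates; this is routine, and I do not expect a genuine obstacle.
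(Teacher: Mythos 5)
Your proposal is correct and is essentially the paper's own argument: the paper proves this corollary simply by combining Theorem~\ref{t:main} with Lemma~\ref{l:nom}, exactly as you do. Your padding of a size-$kn$ pencil for a subtuple $A_I$ (with $k\le n^2+1$) to a size-$(n^3+n)$ pencil by zero blocks, compatible with a simultaneous permutation so that the evaluation becomes a direct sum with a zero block, is the routine verification the paper leaves implicit, and it is sound.
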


\begin{proof}
Combine Theorem \ref{t:main} and Lemma \ref{l:nom}.
\end{proof}

\section{Orthogonal, symplectic and unitary similarity}\label{sec4}

In this section we derive the analog of Theorem \ref{t:main} for groups preserving bilinear forms. Throughout the section let $\kk$ be either an algebraically closed field of characteristic 0, or a real closed field.
Given an involution $*$ on $\Mat_n$ and $A=(A_1,\dots,A_m)\in\Mat_n^m$ let
$(A,A^*)=(A_1,\dots,A_m,A_1^*,\dots,A_m^*)\in\Mat_n^{2m}$.

\begin{proposition}\label{p:other}
Let $*$ be an involution on $\Mat_n$ and $G$ a subgroup of $\GL_n$ in one of the following setups:
\begin{enumerate}[\rm(a)]
	\item $\kk$ is real closed or algebraically closed of characteristic 0, $*$ is the transpose and $G$ is the orthogonal group;
	\item $n$ is even, $\kk$ is algebraically closed of characteristic 0, $*$ is the symplectic involution and $G$ is the symplectic group;
	\item $\kk$ is the algebraic closure of a real closed field, $*$ is the conjugate transpose and $G$ is the unitary group.
\end{enumerate}
Then $A,B\in \Mat_n^m$ are $G$-similar if and only if
$(A,A^*),(B,B^*)\in \Mat_n^{2m}$ are $\GL_n$-similar.
\end{proposition}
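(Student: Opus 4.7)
The plan is to reduce all three cases to a common algebraic construction. The forward direction is immediate: if $B_i = P A_i P^{-1}$ with $P \in G$, then by definition of $G$ we have $P^* = P^{-1}$, so applying $*$ yields $B_i^* = P A_i^* P^{-1}$, and the same $P$ realizes $\GL_n$-similarity of $(A,A^*)$ and $(B,B^*)$.

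For the converse, suppose $P \in \GL_n$ satisfies $PA_iP^{-1} = B_i$ and $PA_i^*P^{-1} = B_i^*$. Applying $*$ to the first identity and comparing with the second shows that $Q := P^*P$ commutes with every $A_i^*$; a symmetric manipulation yields that $Q$ also commutes with every $A_i$. The core of the argument is then to manufacture a ``square root'' $R$ of $Q$ which (i) lies in $\kk[Q]$, so that $R$ commutes with all $A_i$ and $A_i^*$, and (ii) satisfies $R^* = R$. Granting such an $R$, set $U := PR^{-1}$ and check that
\[
U^*U \;=\; (R^{-1})^* P^*P R^{-1} \;=\; R^{-1} Q R^{-1} \;=\; I,
\]
so $U \in G$, while
\[
U A_i U^{-1} \;=\; P R^{-1} A_i R P^{-1} \;=\; P A_i P^{-1} \;=\; B_i,
\]
giving the desired $G$-similarity.

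The main obstacle, therefore, is constructing $R$, and I expect the argument to split by case. Note first that $Q^* = Q$ follows from $(P^*P)^* = P^* P^{**} = P^*P$ (all three involutions are involutive in the sense that $**=\mathrm{id}$). In the algebraically closed settings (case (b), and the algebraically closed subcase of (a)), the involution $*$ is either the transpose or conjugation by the symplectic form, both of which fix $\kk \cdot I$ pointwise. Since $Q$ is invertible, every root of its minimal polynomial $\mu(x) = \prod_i (x - \lambda_i)^{n_i}$ is nonzero, so truncating the binomial expansion of $\sqrt{\lambda_i + y}$ modulo $y^{n_i}$ and applying the Chinese remainder theorem in $\kk[x]/\mu$ produces $p \in \kk[x]$ with $p(Q)^2 = Q$; then $R := p(Q)$ satisfies $R^* = p(Q^*) = p(Q) = R$ automatically, because the coefficients of $p$ lie in the fixed field of $*$.

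In the real closed settings (the real closed subcase of (a), and (c), where $\kk$ is the algebraic closure of a real closed field), $Q = P^*P$ is positive definite with respect to the standard bilinear/sesquilinear form, so its eigenvalues are positive reals. Lagrange interpolation at these eigenvalues, sending each $\lambda$ to its positive real square root, yields a polynomial $p$ over the real closed subfield such that $R := p(Q)$ is the unique positive definite square root of $Q$; since $*$ fixes the real closed subfield, $R^* = R$, and the argument concludes as above. The technical care in both subcases is ensuring the square root is a \emph{polynomial} in $Q$ — this is what transports the commutation with each $A_i$ and $A_i^*$ from $Q$ to $R$, and is the only subtle point beyond the formal computation for $U = PR^{-1}$.
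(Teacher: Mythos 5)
Your proof is correct, but it takes a genuinely different route from the paper. The paper argues invariant-theoretically: from $\GL_n$-similarity of $(A,A^*)$ and $(B,B^*)$ it deduces that $w(A,A^*)$ and $w(B,B^*)$ have equal traces for every word $w$, and then quotes Procesi's trace criteria for orthogonal, symplectic and unitary similarity from \cite{Pro}, so all three cases are dispatched uniformly by citation. You instead give a direct polar-decomposition-style argument: from $PA_iP^{-1}=B_i$ and $PA_i^*P^{-1}=B_i^*$ you correctly extract that $Q=P^*P$ commutes with all $A_i$ and $A_i^*$, then correct $P$ by a $*$-fixed square root $R=p(Q)\in\kk[Q]$ so that $U=PR^{-1}$ satisfies $U^*U=I$ and still conjugates $A$ to $B$. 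Your square-root construction is sound in each setting: over the algebraically closed characteristic-$0$ fields the truncated binomial series combined with the Chinese remainder theorem in $\kk[x]/(\mu)$ uses exactly the invertibility of $Q$ and characteristic $0$, and since the transpose and the symplectic involution fix scalars, $R^*=R$ follows; in the real closed and unitary settings you should make explicit that positive definiteness of $P^*P$ and the spectral theorem over a real closed field (a Tarski-transfer of the classical statement) guarantee that the minimal polynomial of $Q$ is squarefree with positive roots in the real closed subfield, which is what makes the Lagrange interpolant satisfy $p(Q)^2=Q$; note also that only existence, not uniqueness, of the $*$-fixed square root is needed. What your approach buys is an elementary, self-contained and constructive proof that produces an explicit element of $G$ from the $\GL_n$-conjugator and avoids trace identities altogether (so it would extend, for instance, to algebraically closed fields of characteristic different from $2$ in cases (a) and (b)); what the paper's approach buys is brevity and a uniform treatment of the three cases via established separating invariants.
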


\begin{proof}
If $B=PAP^{-1}$ for $P\in G$, then also $B^*=PA^*P^{-1}$ since $P^*=P^{-1}$. 
Conversely, suppose that $(A,A^*)$ and $(B,B^*)$ are $\GL_n$-similar.
Then for each word $w$ in letters $x_1,\dots,x_m$ and $x_1^*,\dots,x_m^*$,
the matrices $w(A,A^*)$ and $w(B,B^*)$ are similar and thus have the same trace. 
Then $A$ and $B$ are $G$-similar by 
\cite[Theorems 7.1, 15.3 and 16.4]{Pro} in (a),
\cite[Theorems 10.1 and 15.4]{Pro} in (b), and
\cite[Theorems 11.2 and 16.5]{Pro} in (c).
\end{proof}

\begin{corollary}\label{c:other}
Let $*$ and $G$ be as in Proposition \ref{p:other}.
Then $A,B\in \Mat_n^m$ are $G$-similar if and only if
$$
\rk\Big(I\otimes T_0 + \sum_{i=1}^m (A_i\otimes T_i+A_i^*\otimes T_{i+m}) \Big)
=\rk\Big(I\otimes T_0 + \sum_{i=1}^m (B_i\otimes T_i+B_i^*\otimes T_{i+m}) \Big)
$$
for all $T\in\Mat_{2mn}^{2m+1}$.
\end{corollary}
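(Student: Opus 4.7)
The plan is to combine Proposition \ref{p:other} with Theorem \ref{t:main} applied to the doubled tuples $(A,A^*),(B,B^*) \in \Mat_n^{2m}$. Proposition \ref{p:other} precisely converts $G$-similarity of $m$-tuples into $\GL_n$-similarity of $2m$-tuples, which is exactly the setting where Theorem \ref{t:main} applies.

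First I would note that the forward direction is immediate: if $B_i = PA_iP^{-1}$ for $P \in G$, then $B_i^* = PA_i^*P^{-1}$ since $P^* = P^{-1}$, and conjugation preserves the rank of any expression of the form $I \otimes T_0 + \sum_i (B_i \otimes T_i + B_i^* \otimes T_{i+m})$ via the similarity $P \otimes I$ acting on the first tensor factor.

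For the converse, assume the rank equalities hold for every $T = (T_0,\dots,T_{2m}) \in \Mat_{2mn}^{2m+1}$. View this as the rank condition in Theorem \ref{t:main}(ii) for the $2m$-tuples $(A,A^*)$ and $(B,B^*)$ in $\Mat_n^{2m}$; here the ambient size $2mn$ matches exactly the size $m'n$ for $m' = 2m$ variables required by Theorem \ref{t:main}. Hence Theorem \ref{t:main} yields that $(A,A^*)$ and $(B,B^*)$ are $\GL_n$-similar. Then Proposition \ref{p:other} produces $P \in G$ with $B = PAP^{-1}$, completing the proof.

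The main (minor) subtlety will be just bookkeeping the indexing: the variables split into the $m$ that multiply $A_i$ and the $m$ that multiply $A_i^*$, so one must confirm that the pencil
\[
I \otimes T_0 + \sum_{i=1}^m \bigl(A_i \otimes T_i + A_i^* \otimes T_{i+m}\bigr)
\]
is exactly the generic pencil in $2m$ variables evaluated at the tuple $(A,A^*)$, with pencil size $2mn$. Once this identification is made, no further argument is needed beyond citing the two results.
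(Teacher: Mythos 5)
Your proposal is correct and matches the paper's proof, which is exactly the one-line combination of Proposition \ref{p:other} (converting $G$-similarity to $\GL_n$-similarity of the doubled tuples $(A,A^*)$, $(B,B^*)$) with Theorem \ref{t:main} applied in $2m$ variables, where the pencil size $2mn$ is precisely $m'n$ for $m'=2m$. The indexing check you flag is the only bookkeeping involved, and your handling of it is fine.
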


\begin{proof}
Combine Proposition \ref{p:other} and Theorem \ref{t:main}.
\end{proof}

Using tools from real algebraic geometry \cite{BCR}, Corollary \ref{c:other} can be strengthened for unitary involutions. 
Unless stated otherwise, for the rest of the section let 
$\kk$ be the algebraic closure of a real closed field, let $*$ be the conjugate transpose on $\Mat_n$, and $\UU_n\subset \GL_n$ the unitary group.

\begin{lemma}\label{l:irr} 
Let $C\in\Mat_p^m$ be such that the module $M_{(C,C^*)}$ is irreducible. 
For every $K\in\Mat_n^m$ such that $M_{(C,C^*)}$ does not embed into $M_{(K,K^*)}$, there exists $T\in\Mat_{(2m+1)p}^m$ such that
\begin{align*}
\dim\ker\Big(I\otimes I + \sum_{i=1}^m (C_i\otimes T_i+C_i^*\otimes T_i^*) \Big)=1,\\
\dim\ker\Big(I\otimes I + \sum_{i=1}^m (K_i\otimes T_i+K_i^*\otimes T_i^*) \Big)=0.
\end{align*}
\end{lemma}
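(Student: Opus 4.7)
The plan is to reduce to a representation-theoretic $\Hom$-computation via Schur's lemma, produce an unconstrained witness using the Theorem~\ref{t:main} recipe, and then impose the adjoint relation $V_i=U_i^*$ by a real-algebraic density argument. Throughout, write $\psi_C(U,V):=I+\sum_i C_i\otimes U_i+\sum_i C_i^*\otimes V_i$ and analogously $\psi_K(U,V)$.

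First, since $\kk$ is algebraically closed and $M_{(C,C^*)}$ is irreducible, Schur's lemma gives $\dim\End(M_{(C,C^*)})=1$. Any nonzero morphism out of a simple module is injective, so the non-embedding hypothesis is equivalent to $\Hom(M_{(C,C^*)},M_{(K,K^*)})=0$; the target kernel dimensions $1$ and $0$ in the lemma therefore coincide with these $\dim\Hom$ values.

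Next, view $(C,C^*)\in\Mat_p^{2m}$ and $(K,K^*)\in\Mat_n^{2m}$ as $2m$-tuples and apply Lemma~\ref{l:m2t} together with the construction from the proof of Theorem~\ref{t:main} with source module $X=M_{(C,C^*)}$. This produces matrices $U_1,\ldots,U_m,V_1,\ldots,V_m$ of size $(2m+1)p$, \emph{a priori} without the relation $V_i=U_i^*$, such that $\dim\ker\psi_C(U,V)=1$ and $\dim\ker\psi_K(U,V)=0$. The enlargement from the naive $2mp$ to $(2m+1)p$ accommodates one extra $p$-block used to absorb the affine-linear constant coefficient of the Theorem~\ref{t:main} pencil into the identity $I_{(2m+1)p}$ via a Schur-complement manipulation.

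Finally, write $\kk=\kk_0[\sqrt{-1}]$ with $\kk_0$ real closed. The locus $R=\{(U,V)\in\Mat_{(2m+1)p}(\kk)^{2m}:V_i=U_i^*\}$ is the graph of the $\kk$-antilinear involution $U\mapsto U^*$; a short argument (monomials in $U$ and $U^*$ are $\kk$-linearly independent as functions on $R$) shows that its $\kk$-Zariski closure is the entire ambient space. Consequently the open condition $\det\psi_K\neq 0$ is satisfied on a Zariski-dense subset of $R$. On $R$ the matrix $\psi_C(U,U^*)$ is Hermitian; Jacobson density ($\kk\langle C,C^*\rangle=\Mat_p(\kk)$, since $M_{(C,C^*)}$ is absolutely irreducible) ensures that as $U$ varies the image $\psi_C(U,U^*)-I$ sweeps out a sufficiently rich $\kk_0$-subspace of Hermitian matrices to attain a simple $-1$-eigenvalue. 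The Tarski--Seidenberg transfer principle from \cite{BCR} then yields a simultaneous realization of both kernel-dimension conditions.

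The main technical obstacle is this last step: Zariski density of $R$ immediately handles the open condition at $K$, but the closed condition $\dim\ker\psi_C\ge1$ is not Zariski open, so one must combine the real-algebraic content of \cite{BCR} with a dimension/image count on $U\mapsto\psi_C(U,U^*)$ to guarantee that a Hermitian matrix with a simple $-1$-eigenvalue is attained alongside $\det\psi_K\neq 0$.
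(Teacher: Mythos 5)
Your overall plan coincides with the paper's (reduce to $\dim\Hom$ statements via Lemma \ref{l:m2t}, build an unconstrained monic witness, then impose $T_{i+m}=T_i^*$ by a density argument over the real structure), but both load-bearing steps are left as gestures and, as stated, do not go through. First, the unconstrained witness: the recipe of Theorem \ref{t:main} produces a pencil $I\otimes T_0+\sum_i A_i\otimes T_i$ whose constant coefficient $T_0$ is highly singular, and a ``Schur-complement manipulation with one extra $p$-block'' cannot absorb it into an identity while preserving the exact kernel dimensions. The paper instead passes to the Gram matrix: it left-multiplies $M^*M$ (where $M$ is the stacked matrix of Lemma \ref{l:m2t}) by $R^{-1}\otimes I$ with $R=\sum_i(C_i^{*\tra}C_i^{\tra}+C_i^{\tra}C_i^{*\tra})$, which is invertible precisely because $M_{(C,C^*)}$ is irreducible, and uses the positivity of the conjugate-transpose over the real closed base field to guarantee $\ker M^*M=\ker M$; the $2m$ extra diagonal blocks in \eqref{e:bigmtx} then linearize the quadratic terms $A_i^*A_i,A_iA_i^*$, yielding a genuinely monic pencil of size $(2m+1)p$. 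None of this is recovered by your description, and without it the nonemptiness of the witness set is unproved.

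Second, and more seriously, the passage to the Hermitian-structured locus: Zariski density of $\mathcal{R}=\{(U,V):V_i=U_i^*\}$ in the ambient affine space only handles the \emph{open} condition $\det\psi_K\neq 0$; the condition $\dim\ker\psi_C(U,U^*)=1$ forces you onto the determinantal hypersurface $\mathcal{X}=\{\det\psi_C=0\}$, and what is needed is density of the real points \emph{inside} $\mathcal{X}$. The paper obtains this from the fact that the determinant of a monic hermitian pencil is a real zero polynomial \cite{HV} together with \cite[Proposition 5.1]{KV}, so that the nonempty relatively open set $\mathcal{Y}\subset\mathcal{X}$ must meet $\mathcal{R}$. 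Your substitute --- Jacobson density showing the Hermitian image is ``sufficiently rich to attain a simple $-1$-eigenvalue,'' followed by Tarski--Seidenberg transfer --- is not a proof: the existence of a structured tuple with a simple eigenvalue at the prescribed value, simultaneously with nondegeneracy at $K$, is essentially the content of the lemma itself (indeed it is the quantum Kippenhahn statement, Corollary \ref{c:kip}, which the paper \emph{deduces from} this lemma), and Tarski--Seidenberg only transfers a first-order statement between real closed fields once it has been established over one of them, which you have not done. So the argument has a genuine gap at exactly the point you flag as the main obstacle.
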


\begin{proof}
By Lemma \ref{l:m2t}, the dimension of the kernel of
$$
\begin{pmatrix} I\otimes A_1-C_1^\tra\otimes I \\ 
\vdots \\
I\otimes A_m-C_m^\tra\otimes I \\
I\otimes A_1^*-C_1^{*\tra}\otimes I \\ 
\vdots \\
I\otimes A_m^*-C_m^{*\tra}\otimes I
\end{pmatrix}$$
is $1$ if $A=C$ and $0$ if $A=K$. 
Let $R=\sum_i (C_i^{*\tra}C_i^{\tra}+C_i^{\tra}C_i^{*\tra})$ 
(which is invertible by irreducibility);
then the same conclusion holds for the matrix
\begin{align*}
& (R^{-1}\otimes I)\begin{pmatrix} I\otimes A_1-C_1^\tra\otimes I \\ 
\vdots \\
I\otimes A_m^*-C_m^{*\tra}\otimes I
\end{pmatrix}^*
\begin{pmatrix} I\otimes A_1-C_1^\tra\otimes I \\ 
\vdots \\
I\otimes A_m^*-C_m^{*\tra}\otimes I
\end{pmatrix} \\
=\ & I\otimes I
+\sum_i R^{-1} \otimes(A_i^*A_i+A_iA_i^*)
-2\sum_i(R^{-1}C_i^{\tra}\otimes A_i^*+R^{-1}C_i^{*\tra}\otimes A_i).
\end{align*}
Furthermore, a Schur complement argument then implies that the dimension of the kernel of
\begin{equation}\label{e:bigmtx}
\begin{pmatrix}
I\otimes I &  &  & -R^{-1}\otimes A_1^*\\
 & \ddots &  & \vdots\\
 &  & I\otimes I & -R^{-1}\otimes A_m\\
I\otimes A_1 & \cdots & I\otimes A_m^* & I\otimes I
-2\sum_i(R^{-1}C_i^{\tra}\otimes A_i^*+R^{-1}C_i^{*\tra}\otimes A_i)
\end{pmatrix},
\end{equation}
where the missing blocks are zero,
is $1$ if $A=C$ and $0$ if $A=K$.

In the affine space $\Mat_{(2m+1)p}^{2m}$ consider the sets
\begin{alignat*}{3}
\mathcal{X}&=\bigg\{ T\in\Mat_{(2m+1)p}^{2m}&&\colon 
&&\det\Big(I\otimes I+\sum_i (C_i\otimes T_i+C_i^*\otimes T_{i+m})\Big)=0 \bigg\},\\
\mathcal{Y}&=\bigg\{T\in\Mat_{(2m+1)p}^{2m}&&\colon 
&&\dim\ker\Big(I\otimes I+\sum_i (C_i\otimes T_i+C_i^*\otimes T_{i+m})\Big)=1 \\
& && \&\ &&\dim\ker\Big(I\otimes I+\sum_i (K_i\otimes T_i+K_i^*\otimes T_{i+m})\Big)=0
\bigg\},\\
\mathcal{R}&=\bigg\{ T\in\Mat_{(2m+1)p}^{2m}&&\colon 
&&T_{i+m}=T_i^* \text{ for } 1\le i\le m \bigg\}.
\end{alignat*}
Then $\mathcal{Y}$ is a Zariski open subset of the algebraic set $\mathcal{X}$, and $\mathcal{R}$ is the set of real points in $\Mat_{(2m+1)p}^{2m}$ with respect to the real structure $(U,V)\mapsto (V^*,U^*)$ for $(U,V)\in \Mat_{(2m+1)p}^m\times \Mat_{(2m+1)p}^m=\Mat_{(2m+1)p}^{2m}$.
Note that $\mathcal Y\neq\emptyset$ by \eqref{e:bigmtx}.
The determinant of a monic hermitian pencil is a \emph{real zero polynomial} \cite{HV},
meaning it has only real zeros along every line through the origin.
Since $\mathcal{X}$ is therefore the zero set of a real zero polynomial, 
it follows by \cite[Proposition 5.1]{KV} that $\mathcal{X}\cap\mathcal{R}$ is Zariski dense in $\mathcal{X}$. Therefore $\mathcal{Y}\cap\mathcal{R}\neq\emptyset$, which is the required conclusion.
\end{proof}

The next statement shows that for certifying unitary similarity with the rank equality condition \ref{e:rank},
instead of general $(2m+1)$-tuples as in Corollary \ref{c:other}
it suffices to consider only those of a special form $(I,T,T^*)$ for an $m$-tuple $T$.

\begin{theorem}\label{t:real}
The tuples $A,B\in \Mat_n^m$ are $\UU_n$-similar if and only if
$$\rk\Big(I\otimes I + \sum_{i=1}^m (A_i\otimes T_i+A_i^*\otimes T_i^*) \Big) =
\rk\Big(I\otimes I + \sum_{i=1}^m (B_i\otimes T_i+ B_i^*\otimes T_i^*) \Big)$$
for all $T\in\Mat_{(2m+1)n}^{m}$.
\end{theorem}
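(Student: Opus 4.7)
The forward direction is immediate: if $B_i=PA_iP^{-1}$ for some $P\in\UU_n$, then $P^{-1}=P^*$ yields $B_i^*=PA_i^*P^{-1}$, so the pencil at $B$ equals $(P\otimes I)$ times the pencil at $A$ times $(P^{-1}\otimes I)$, preserving rank. For the converse I assume $A,B$ are not $\UU_n$-similar; by Proposition~\ref{p:other}(c) the $2m$-tuples $(A,A^*)$ and $(B,B^*)$ are not $\GL_n$-similar. The key starting observation is that $M_{(A,A^*)}$ and $M_{(B,B^*)}$ are $*$-representations (since $A_i^*$ acts as the adjoint of $A_i$ with respect to the standard Hermitian form on $\kk^n$), and hence are semisimple: the orthogonal complement of an invariant subspace is again invariant, using that the standard form is positive definite on the real slice of $\kk^n$.

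Let $X_1,\dots,X_s$ be representatives of the distinct isomorphism classes of simple $*$-submodules occurring in $M_{(A,A^*)}$ or $M_{(B,B^*)}$, and write
\[
M_{(A,A^*)}\cong\bigoplus_{j=1}^s X_j^{a_j},\qquad M_{(B,B^*)}\cong\bigoplus_{j=1}^s X_j^{b_j}.
\]
Non-isomorphism forces $a_j\neq b_j$ for some $j$; after relabeling and possibly swapping $A\leftrightarrow B$ I may assume $a_1\neq b_1$ with $a_1\geq 1$. Choosing orthonormal bases on each simple summand, I realize $X_1\cong M_{(C,C^*)}$ for some $C\in\Mat_p^m$ with $p=\dim X_1\leq n$, and $\bigoplus_{j\geq 2}X_j^{a_j+b_j}\cong M_{(K,K^*)}$ for some $K\in\Mat_q^m$. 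Then $M_{(C,C^*)}$ is irreducible and does not embed into $M_{(K,K^*)}$, so Lemma~\ref{l:irr} supplies $T\in\Mat_{(2m+1)p}^m$ with
\[
\dim\ker\Big(I+\sum_i(C_i\otimes T_i+C_i^*\otimes T_i^*)\Big)=1,\quad \dim\ker\Big(I+\sum_i(K_i\otimes T_i+K_i^*\otimes T_i^*)\Big)=0.
\]

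I then pad $T_i$ to $T'_i:=T_i\oplus 0\in\Mat_{(2m+1)n}$ and set $P(X):=I\otimes I+\sum_i(X_i\otimes T'_i+X_i^*\otimes (T'_i)^*)$. Choosing bases consistent with the semisimple decomposition makes $A_i$ block diagonal, so $P(A)$ splits (after reordering tensor factors) into a direct sum of the pencils associated to each simple summand of $M_{(A,A^*)}$, together with an identity block coming from the zero padding. Kernel dimensions add under direct sums; the vanishing at $K$, combined with $a_j+b_j\geq 1$ for $j\geq 2$, forces the pencil to have kernel dimension $0$ at every realization of $X_j$ with $j\geq 2$. Hence $\dim\ker P(A)=a_1$ and $\dim\ker P(B)=b_1$, and since $a_1\neq b_1$ the ranks differ.

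The main technical obstacle is constructing a single test pencil of the restricted form $(I,T,T^*)$ that simultaneously detects the one mismatched simple type $X_1$ while being blind to every other simple component; if even one extraneous simple summand contributed a nonzero kernel, the multiplicity count could be washed out. My resolution is to package all simple types other than $X_1$ into the single auxiliary tuple $K$, so that one application of Lemma~\ref{l:irr} handles them uniformly, the internal real-zero-polynomial density argument automatically producing the required Hermitian-slice witness.
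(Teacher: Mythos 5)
Your proposal is correct and follows essentially the same route as the paper's proof: reduce to non-isomorphism of the semisimple modules $M_{(A,A^*)}$ and $M_{(B,B^*)}$ via Proposition \ref{p:other}(c), pick a simple summand $M_{(C,C^*)}$ with mismatched multiplicities, collect the remaining simple types into $K$, and apply Lemma \ref{l:irr}. You additionally spell out details the paper leaves implicit (zero-padding $T$ from size $(2m+1)p$ to $(2m+1)n$, and the additivity of kernel dimensions over the orthogonal isotypic decomposition), which is fine.
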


\begin{proof}
The modules $M_{(A,A^*)}$ and $M_{(B,B^*)}$ are semisimple \cite[Page 90]{Lam}. 
If they are not isomorphic, then there exists an irreducible module $M_{(C,C^*)}$ for $C\in\Mat_p^m$ for $p\le n$ that appears with distinct multiplicities in $M_{(A,A^*)}$ and $M_{(B,B^*)}$. 
Let $M_{(K,K^*)}$ be the direct sum of all irreducible submodules in $M_{(A,A^*)}$ or $M_{(B,B^*)}$ that are not isomorphic to $M_{(C,C^*)}$. Lemma \ref{l:irr} applied to $C$ and $K$ yields the desired matrix tuple $T$.
\end{proof}

Applying Theorem \ref{t:real} to matrix tuples over the underlying real closed field gives the following.

\begin{corollary}\label{c:real2}
Suppose $\kk$ is a real closed field and $\OO_n\subset \GL_n$ is the orthogonal group.
Then $A,B\in \Mat_n^m$ are $\OO_n$-similar if and only if
$$\rk\Big(I\otimes I + \sum_{i=1}^m (A_i\otimes T_i+A_i^\tra\otimes T_i^\tra) \Big) =
\rk\Big(I\otimes I + \sum_{i=1}^m (B_i\otimes T_i+ B_i^\tra\otimes T_i^\tra) \Big)$$
for all $T\in\Mat_{2(2m+1)n}^{m}$.
\end{corollary}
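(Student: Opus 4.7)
The forward direction is immediate: if $B_i=PA_iP^\tra$ for some $P\in \OO_n(\kk)$, then also $B_i^\tra=PA_i^\tra P^\tra$, and conjugating the real pencil by $P\otimes I$ carries the expression for $B$ to the one for $A$ and preserves rank.

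For the reverse direction, I would let $K=\kk[i]$ denote the algebraic closure of $\kk$ and reduce to Theorem \ref{t:real} applied to $A,B\in \Mat_n^m(\kk)\subset\Mat_n^m(K)$. The main tool is the standard $\kk$-algebra embedding
\[
\phi\colon \Mat_d(K)\longrightarrow \Mat_{2d}(\kk),\qquad
U+iV\longmapsto \begin{pmatrix} U & -V \\ V & U \end{pmatrix}.
\]
Two properties of $\phi$ are used: $\rk_\kk\phi(M)=2\rk_K M$, and (by a direct block computation) $\phi(M^*)=\phi(M)^\tra$. Given any $T\in\Mat_{(2m+1)n}^m(K)$, set $T'=(\phi(T_1),\dots,\phi(T_m))\in\Mat_{2(2m+1)n}^m(\kk)$. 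Applying $\phi$ on the second tensor factor of the complex pencil $I\otimes I+\sum_i(A_i\otimes T_i+A_i^*\otimes T_i^*)$ -- using $A_i^*=A_i^\tra$ since $A_i$ is real -- produces precisely $I\otimes I+\sum_i(A_i\otimes T_i'+A_i^\tra\otimes T_i'^\tra)$, and the analogue for $B$. Rank equality of the real pencils for all $T'$ therefore specializes to rank equality of the complex pencils for all $T$, so Theorem \ref{t:real} gives $\UU_n(K)$-similarity of $A$ and $B$.

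To close the loop, I need to descend $\UU_n(K)$-similarity of these real tuples to $\OO_n(\kk)$-similarity. Proposition \ref{p:other}(c) converts the former to $\GL_n(K)$-similarity of $(A,A^\tra)$ and $(B,B^\tra)$; since both tuples lie over $\kk$, a standard Zariski-density argument on the $\kk$-rational affine variety of intertwiners (using infinitude of $\kk$) shows these are also $\GL_n(\kk)$-similar. Finally, Proposition \ref{p:other}(a), applied in the real closed setting with $*$ the transpose and $G=\OO_n$, returns $\OO_n(\kk)$-similarity of $A$ and $B$.

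The main point to check carefully will be the compatibility $\phi(M^*)=\phi(M)^\tra$ and the attendant size bookkeeping: the factor of $2$ that $\phi$ introduces is exactly what turns the bound $(2m+1)n$ in Theorem \ref{t:real} into the bound $2(2m+1)n$ asserted here.
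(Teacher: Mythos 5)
Your proposal is correct and follows essentially the same route as the paper: the paper's one-line proof is precisely the $*$-compatible embedding of $(2m+1)n\times(2m+1)n$ matrices over $K=\kk[i]$ into $2(2m+1)n\times 2(2m+1)n$ real matrices, followed by Theorem \ref{t:real} and Proposition \ref{p:other}. Your Zariski-density descent from $\GL_n(K)$-similarity of $(A,A^\tra),(B,B^\tra)$ to $\GL_n(\kk)$-similarity is a legitimate way of making explicit the final step the paper leaves implicit (which could equivalently be done via trace equality and Procesi's theorem over the real closed field).
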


\begin{proof}
Note that $(2m+1)n\times (2m+1)n$ complex matrices $*$-embed into $2(2m+1)n\times 2(2m+1)n$ real matrices, so the statement follows by Theorem \ref{t:real} and Proposition \ref{p:other}.
\end{proof}

Lastly, Lemma \ref{l:irr} also gives an improved matrix size bound, 
linear in $m$ and in $n$, 
for the quantum version \cite[Corollary 5.7]{KV} of the Kippenhahn conjecture \cite[Section 8]{Kip}.

\begin{corollary}\label{c:kip} 
Let $H\in\Mat_n^m$ be an irreducible tuple of hermitian matrices. There is a tuple of hermitian matrices $T\in\Mat_{(m+1)n}^m$ such that $H_1\otimes T_1+\cdots+H_m\otimes T_m$ has a simple nonzero eigenvalue.
\end{corollary}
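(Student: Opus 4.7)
The plan is to adapt the proof of Lemma \ref{l:irr} to the hermitian setting, exploiting $H_i=H_i^*$ to halve the number of block rows in the Schur-complement reduction; this is precisely what yields the improved size bound $(m+1)n$ instead of the $(2m+1)n$ that a direct application of Lemma \ref{l:irr} would give. The target is to find hermitian $T_i\in\Mat_{(m+1)n}$ with $\dim\ker(I+\sum_i H_i\otimes T_i)=1$, for then $-1$ is a simple (nonzero) eigenvalue of $\sum_i H_i\otimes T_i$.

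First I would invoke irreducibility of $H$ together with Schur's lemma (valid because $\kk$ is algebraically closed) to conclude that $\dim\Hom(M_H,M_H)=1$. Applying Lemma \ref{l:m2t} with $A=C=H$, and discarding the rows corresponding to $H_i^*$ since they merely duplicate those of $H_i$, produces an $m$-block column matrix of size $mn^2\times n^2$ with one-dimensional kernel. I then run the Schur-complement construction from the proof of Lemma \ref{l:irr} with only $m$ (not $2m$) diagonal blocks of $I\otimes I$, and with $R=\sum_i \overline{H_i}^2$ (invertible because $H$ is irreducible: any vector in $\ker\sum_i H_i^2$ spans a common invariant line of the $H_i$). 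This delivers an $(m+1)\times(m+1)$ block matrix of total size $(m+1)n^2$ whose kernel still has dimension one. After permuting tensor factors this matrix coincides with $I+\sum_i H_i\otimes T_i^{(0)}$ for some explicit $T_i^{(0)}\in\Mat_{(m+1)n}$, generally not hermitian, which certifies that the Zariski open locus $\mathcal Y=\{T\in\Mat_{(m+1)n}^m:\dim\ker(I+\sum_i H_i\otimes T_i)=1\}$ is a nonempty subset of the singular-pencil variety $\mathcal X=\{T:\det(I+\sum_i H_i\otimes T_i)=0\}$.

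To finish, I would invoke the real-zero density argument exactly as in the proof of Lemma \ref{l:irr}, but with the simpler real structure $T\mapsto(T_1^*,\dots,T_m^*)$ on $\Mat_{(m+1)n}^m$, whose real locus $\mathcal R$ is the set of tuples of hermitian matrices. Since the $H_i$ are hermitian, the pencil $I+\sum_i H_i\otimes T_i$ restricted to $\mathcal R$ is a monic hermitian linear pencil in real parameters (namely, the entries parametrizing hermitian $T_i$), so its determinant is a real zero polynomial by \cite{HV}. Then \cite[Proposition 5.1]{KV} yields that $\mathcal X\cap\mathcal R$ is Zariski dense in $\mathcal X$, so $\mathcal Y\cap\mathcal R\neq\emptyset$, which gives the desired hermitian $T$.

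The main obstacle I expect is the tensor-reordering bookkeeping in the middle step: explicitly verifying that the truncated $(m+1)\times(m+1)$ Schur-complement matrix can be rewritten, after permuting tensor factors, as a single pencil $I+\sum_i H_i\otimes T_i^{(0)}$ with $T_i^{(0)}\in\Mat_{(m+1)n}$, so that the size bound $(m+1)n$ really is extracted. Once this is in place, both the irreducibility/Schur's-lemma input and the real-zero density output are transcriptions of the corresponding ingredients of Lemma \ref{l:irr}.
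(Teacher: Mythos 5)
Your proposal is correct and matches the paper's intended derivation: Corollary \ref{c:kip} is stated as a consequence of (the proof of) Lemma \ref{l:irr}, and your hermitian specialization—discarding the duplicated block rows since $H_i^*=H_i$, running the Schur-complement construction with $m+1$ blocks to obtain pencil coefficients of size $(m+1)n$, and then applying the real-zero density argument with the real structure $T\mapsto T^*$ whose real points are hermitian tuples—is exactly that adaptation. The concluding step, that a one-dimensional kernel of $I+\sum_i H_i\otimes T_i$ with hermitian $H_i,T_i$ gives $-1$ as a simple nonzero eigenvalue, is the intended finish.
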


\section{Orbit equivalence for the left-right action}\label{sec5}

The left-right action of $\GL_p \times \GL_q$ (and its subgroup $\SL_p \times \SL_q$) on matrix tuples by simultaneous left and right multiplication has been of considerable interest in the past few years. Hrube{\v s} and Wigderson \cite{HW} showed that the orbit closure intersection problem (more precisely, the so-called null cone membership problem for the left-right action of $\SL_n \times \SL_n$) captures the problem of non-commutative rational identity testing. Identity testing problems are key to some of the deepest outstanding problems in complexity theory, see \cite{Mul17, KI04}. Polynomial time algorithms in this case were obtained in recent years \cite{GGOW16, IQS, DM17, DM20}.
These algorithms also inspired progress in other subjects like noncommutative geodesic optimization \cite{BFGO+}, algebraic statistics \cite{AKRS21, DM21}, Brascamp-Lieb inequalities \cite{GGOW-BL}, and the Paulsen problem \cite{Paulsen}.

Even amidst this flurry of activity, a polynomial time algorithm for the orbit equivalence problem for the left-right action of $\SL_p \times \SL_q$-action remained elusive. Note that for the left-right action of $\GL_p \times \GL_q$, a polynomial time algorithm for the orbit equivalence problems follows from the results of Brooksbank and Luks \cite{BL08}. In this section, we develop some structural results regarding orbit equivalence that we then use to give polynomial time algorithms in Section~\ref{sec7}.

\subsection{\texorpdfstring{$\GL_p\times\GL_q$}{GLpxGLq} action}

In this section we consider the action of $\GL_p\times\GL_q$ on $\Mat_{p,q}^m$ by simultaneous left and right multiplication.
Let $\Lambda_m$ be the path algebra of the $m$-Kronecker quiver. That is,
$$\Lambda_m=\kk\Langle e,y_1,\dots,y_m\mid e^2=e, ey_j=y_j, y_iy_j=y_je=0\Rangle.$$
Every $C\in \Mat_{p,q}^m$ determines a finite-dimensional $\Lambda_m$-module $N_C$ with dimension vector $(p,q)$ (and $\dim N_C=p+q$), and vice versa \cite[Section 7.1]{DW}. 
Concretely, $e$ acts on $\kk^p\times \kk^q$ as the projection onto the first component, while $y_j$ acts by matrix multiplication with 
$(\begin{smallmatrix}0 & C_j \\ 0 & 0\end{smallmatrix})$.
Modules $N_A,N_B$ for $A,B\in \Mat_{p,q}^m$ are isomorphic if and only if $A,B$ are in the same $\GL_p \times \GL_q$-orbit.

\begin{lemma}\label{l:m2t2}
Let $A\in\Mat_{p,q}^m$ and $C \in \Mat_{r,s}^m$. Then $\dim \Hom(N_A,N_C)$ equals the dimension of the kernel of the $mps\times (qs+pr)$ matrix
$$
\begin{pmatrix}
I_s\otimes A_1 & -C_1^\tra\otimes I_p\\
\vdots &  \vdots \\
I_s\otimes A_m & -C_m^\tra\otimes I_p\\
\end{pmatrix}
= 
\begin{pmatrix}
\sum_{i=1}^m (e_i \otimes I_s)\otimes A_i &
\begin{pmatrix}
-C_1^\tra\\
\vdots \\
-C_m^\tra
\end{pmatrix}\otimes I_p
\end{pmatrix}.$$
\end{lemma}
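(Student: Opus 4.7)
The proof is a direct adaptation of Lemma \ref{l:m2t} to the Kronecker quiver algebra $\Lambda_m$, so the plan has the same three-step shape: unpack the hom condition into an intertwining relation, recast it as the kernel of a linear map between matrix spaces, and then matricize with $\mathrm{vec}$.

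First, I unpack the hom-space module-theoretically. Because every $\Lambda_m$-homomorphism must commute with the idempotent $e$, it respects the decomposition of each module into its $e$-part and the complementary part. A hom-space element is thus given by a pair $(Y, X)$, where $Y$ sits between the two $e$-parts (hence $Y \in \Mat_{p,r}$) and $X$ sits between the two complements (hence $X \in \Mat_{q,s}$). Commuting with each generator $y_j$, whose action on $N_A$ and $N_C$ is given by the off-diagonal blocks with $A_j$ and $C_j$ respectively, collapses to the $m$ intertwining relations $A_j X = Y C_j$, each living in $\Mat_{p,s}$. This identifies the hom-space with the kernel of the linear map
$$
\Mat_{q,s} \oplus \Mat_{p,r} \longrightarrow \Mat_{p,s}^m, \qquad (X, Y) \longmapsto (A_1 X - Y C_1, \dots, A_m X - Y C_m).
$$

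Second, I matricize this map. Using the standard identity $\mathrm{vec}(UVW) = (W^\tra \otimes U)\mathrm{vec}(V)$ for column-stacking $\mathrm{vec}$, the two summands transform as $\mathrm{vec}(A_j X) = (I_s \otimes A_j)\mathrm{vec}(X)$ and $\mathrm{vec}(YC_j) = (C_j^\tra \otimes I_p)\mathrm{vec}(Y)$. Stacking the $m$ equations, and placing $\mathrm{vec}(X)$ in the first $qs$ coordinates and $\mathrm{vec}(Y)$ in the next $pr$, reproduces exactly the displayed $mps \times (qs + pr)$ matrix, so its kernel has the claimed dimension.

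There is no substantive obstacle: the computation is a mechanical analogue of the one carried out in Lemma \ref{l:m2t}, now with two vertex spaces playing the role that the single vertex plays in the free-algebra case. The only care needed is in pinning down conventions (the direction of the arrows in $\Lambda_m$, the order of the two entries of the dimension vector, and column- versus row-stacking) so that the Kronecker factors come out in the asserted order and sign.
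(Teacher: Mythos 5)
Your proof is correct and takes essentially the same route as the paper: identify the hom-space with pairs of matrices satisfying the intertwining relations $A_jX=YC_j$ and matricize via column-stacking, which yields exactly the displayed $mps\times(qs+pr)$ matrix. One remark on the convention you flagged: the shapes you use ($X\in\Mat_{q,s}$, $Y\in\Mat_{p,r}$) compute $\Hom(N_C,N_A)$, which is the direction the paper's own proof and its use in Theorem \ref{t:leftright} employ, so your implicit reading of the statement's ``$\Hom(N_A,N_C)$'' (apparently a typo) agrees with the intended one.
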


\begin{proof}
The space $\Hom(N_C,N_A)$ is identified with the set of pairs $(P,Q)\in \Mat_{q,s}\times\Mat_{p,r}$ such that $QC_i=A_iP$ for all $i$. 
As in the proof of Lemma \ref{l:m2t} we hence view $\Hom(N_C,N_A)$ as the kernel of the linear map $(P,Q)\mapsto (A_1P-QC_1,\dots,A_mP-QC_m)$, and the matrix representation of this map gives the desired conclusion.
\end{proof}

\begin{theorem}\label{t:leftright}
The following are equivalent for $A,B\in \Mat_{p,q}^m$:
\begin{enumerate}[\rm(i)]
\item $A$ and $B$ are in the same $\GL_p\times\GL_q$-orbit;
\item for every $T\in \Mat_{mq-1,q}^m$,
$$\rk\left(A_1\otimes T_1+ \cdots +A_m\otimes T_m\right) =
\rk\left(B_1\otimes T_1+ \cdots +B_m\otimes T_m\right);$$
\item for every $T\in \Mat_{p,mp-1}^m$,
$$\rk\left(A_1\otimes T_1+ \cdots +A_m\otimes T_m\right) =
\rk\left(B_1\otimes T_1+ \cdots +B_m\otimes T_m\right).$$
\end{enumerate}
\end{theorem}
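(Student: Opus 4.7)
My plan is to handle (i)$\Rightarrow$(ii),(iii) immediately and then reduce (iii)$\Rightarrow$(i) to (ii)$\Rightarrow$(i) by transposition, leaving (ii)$\Rightarrow$(i) as the substantive part. The forward implication is routine: if $B_i=PA_iQ$ with $P\in\GL_p$, $Q\in\GL_q$, then $\sum B_i\otimes T_i=(P\otimes I)(\sum A_i\otimes T_i)(Q\otimes I)$, preserving rank. For the symmetry, I use $\rk(A_i\otimes T_i)=\rk(A_i^\tra\otimes T_i^\tra)$: condition (iii) for $(A,B)\in\Mat_{p,q}^m$ is exactly condition (ii) for the transposed tuples $(A^\tra,B^\tra)\in\Mat_{q,p}^m$, and $\GL_p\times\GL_q$-orbit equivalence corresponds to $\GL_q\times\GL_p$-orbit equivalence of the transposes, so (iii)$\Rightarrow$(i) follows from (ii)$\Rightarrow$(i) applied to $A^\tra,B^\tra$.

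For (ii)$\Rightarrow$(i), I argue by contrapositive. Assuming $A,B$ lie in distinct $\GL_p\times\GL_q$-orbits, equivalently $N_A\not\cong N_B$ as $\Lambda_m$-modules, Proposition \ref{p:fns} produces an indecomposable $\Lambda_m$-submodule $N_C$ of $N_A$ or $N_B$, with dimension vector $(r,s)$ satisfying $r\le p$, $s\le q$ and $\dim\Hom(N_C,N_A)\ne\dim\Hom(N_C,N_B)$. I discard the case $N_C\cong S_1$ (dimension vector $(1,0)$) since $\dim\Hom(S_1,N_X)=p$ for every $X$. Lemma \ref{l:m2t2} then delivers
\[
\rk M_A^C\ne\rk M_B^C,\qquad M_X^C=\begin{pmatrix}\sum_{i=1}^m(e_i\otimes I_s)\otimes X_i & -C^\tra\otimes I_p\end{pmatrix},
\]
with $C^\tra=\begin{pmatrix}C_1^\tra\\ \vdots\\ C_m^\tra\end{pmatrix}\in\Mat_{ms,r}$.

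The main obstacle is to rewrite this rank disparity in the form $\rk\sum X_i\otimes T_i$ for $T_i\in\Mat_{mq-1,q}$, despite the $X$-independent constant block $-C^\tra\otimes I_p$: unlike Theorem \ref{t:main}, no $I\otimes T_0$ term is available to absorb constants. My resolution rests on a structural consequence of indecomposability. When $r,s\ge 1$, the absence of an $S_1$ summand in $N_C$ is equivalent to $\sum_i\mathrm{Im}(C_i)=\kk^r$, hence to $C^\tra$ having full column rank $r$. I then select $U_1\in\GL_{ms}$ with $U_1C^\tra=\bigl(\begin{smallmatrix}I_r\\0\end{smallmatrix}\bigr)$ and left-multiply $M_X^C$ by $U_1\otimes I_p$; the constant block becomes $\bigl(\begin{smallmatrix}-I_{rp}\\0\end{smallmatrix}\bigr)$, and a Schur-complement/column-reduction argument yields
\[
\rk M_X^C=rp+\rk\sum_i W_i^{\mathrm{bot}}\otimes X_i,
\]
where $W_i^{\mathrm{bot}}\in\Mat_{ms-r,s}$ consists of the last $ms-r$ rows of $U_1(e_i\otimes I_s)$. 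Swapping tensor factors (a rank-preserving permutation) and setting $T_i:=W_i^{\mathrm{bot}}$ gives $\rk\sum A_i\otimes T_i\ne\rk\sum B_i\otimes T_i$. Since $s\le q$ and the case $s=q$ forces $r\ge 1$ under our hypotheses, the bound $ms-r\le mq-1$ holds, and padding $T_i$ with zero rows and columns lifts it to $\Mat_{mq-1,q}$ without altering the rank. The residual case $N_C\cong S_2$ amounts to $\rk\vec A\ne\rk\vec B$ with $\vec X:=\begin{pmatrix}X_1\\ \vdots\\ X_m\end{pmatrix}$, and is witnessed directly by $T_i=e_ie_1^\tra\in\Mat_{mq-1,q}$, which satisfies $\rk\sum X_i\otimes T_i=\rk\vec X$. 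Combining both cases closes the contrapositive.
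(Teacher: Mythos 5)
Your argument is correct and follows essentially the same route as the paper: Proposition \ref{p:fns} plus the matricization of Lemma \ref{l:m2t2}, followed by eliminating the constant block $-C^\tra\otimes I_p$ through row reduction and peeling off an identity block to leave a homogeneous pencil with at most $mq-1$ rows. The difference is only in execution: the paper keeps a full-size $C\in\Mat_{p,q}^m$, normalizes the stacked $-C^\tra$ two-sidedly to $\bigl(\begin{smallmatrix}I_r&0\\0&0\end{smallmatrix}\bigr)$ and deletes the first $r$ rows, asserting $0<r<mq$ with the terse parenthetical ``since $C\neq0$''; you instead work with the indecomposable module itself, discard $S_1$, treat $S_2$ separately, and in the main case use indecomposability (no $S_1$ summand $\Leftrightarrow$ $\sum_i\mathrm{Im}\,C_i=\kk^r$ $\Leftrightarrow$ full column rank of the stacked $C^\tra$) to make the one-sided reduction and the bound $ms-r\le mq-1$ transparent --- this is, if anything, a more careful justification of the point the paper compresses. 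One small slip: your explicit $S_2$-witness $T_i=e_ie_1^\tra$ needs $m\le mq-1$, i.e.\ $q\ge2$; for $q=1$ and $m\ge2$ replace it by a single nonzero $T_{i_0}\in\Mat_{m-1,1}$ with the other $T_i=0$, which again detects $\rk\vec A\neq\rk\vec B$ (the remaining case $m=q=1$ is a degenerate corner of the statement itself, where condition (ii) is vacuous).
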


\begin{proof}
(i)$\Rightarrow$(ii),(iii) is straightforward. 
We only need to prove (ii)$\Rightarrow$(i) since (iii)$\Rightarrow$(i) then follows from applying (ii)$\Rightarrow$(i) to $A^\tra,B^\tra$.

If $A$ and $B$ are not in the same $\GL_p\times\GL_q$-orbit, then by Proposition \ref{p:fns} there exists $C\in \Mat_{p,q}^m$ such that $\dim \Hom(N_C,N_A)\neq \dim \Hom(N_C,N_B)$. 
Let $Q\in \GL_{mq}$ and $P\in\GL_p$ be such that
$$Q \begin{pmatrix}
-C_1^\tra\\
\vdots \\
-C_m^\tra
\end{pmatrix}P = \begin{pmatrix}
I_r & 0 \\ 0 & 0
\end{pmatrix}$$
for $r\le p$. By Lemma \ref{l:m2t2} we have
$$
\rk \begin{pmatrix}
\sum_i Q(e_i \otimes I_s)\otimes A_i &
\begin{pmatrix}
I_r & 0 \\ 0 & 0
\end{pmatrix}\otimes I_p
\end{pmatrix}
\neq \rk \begin{pmatrix}
\sum_i Q(e_i \otimes I_s)\otimes B_i &
\begin{pmatrix}
I_r & 0 \\ 0 & 0
\end{pmatrix}\otimes I_p
\end{pmatrix}.
$$
Note that this can only happen if $0<r<mq$ (the first inequality holds since $C\neq0$). Let $T_i \in\Mat_{mq-r,q}$ be obtained by removing the first $r$ rows of $Q(e_i \otimes I_q)$. Then
$\rk(\sum_i A_i \otimes T_i) 
\neq \rk(\sum_i B_i \otimes T_i)$.
\end{proof}

\subsection{\texorpdfstring{$\SL_p\times\SL_q$}{SLpxSLq} action}

Throughout this section let $\kk$ be an algebraically closed field.
Orbit membership in $\Mat_{p,q}^m$ under the left-right action of $\SL_p \times \SL_q$ is more subtle than in the case of $\GL_p\times \GL_q$.
If $p=q=n$ and the tuples $A,B\in\Mat_n^m$ are outside the null cone of the $\SL_n \times \SL_n$ action, we can reduce the $\SL_n \times \SL_n$ equivalence to the $\GL_n$ similarity equivalence by using the ideas from \cite{DM20}. On the other hand, if the tuples are non-square or in the null cone, then $\SL_p \times \SL_q$ orbit membership requires a more refined analysis appealing to some results on preprojective algebras for quivers. Corresponding algorithms for checking $\SL_p\times\SL_q$ equivalence are given in Section \ref{sec72}.

\subsubsection{Reduction from \texorpdfstring{$\SL_n \times \SL_n$}{SLnxSLn} to similarity when outside the null cone}
When detecting orbit equivalence of matrix tuples outside the null cone for the $\SL_n\times \SL_n$-action, the rank equality condition of Theorem \ref{t:leftright} can be supplemented with a determinant equality condition.

\begin{proposition}\label{p:notnull}
Suppose $A,B \in\Mat_n^m$ are in the same $\GL_n \times \GL_n$ orbit and not in the null cone. Then $A$ and $B$ are not in the same $\SL_n \times \SL_n$-times orbit if and only if there exists $d \in \{n-1,n\}$ such that for any choice of $T\in \Mat_d^m$ with $\det(\sum_{i=1}^m A_i \otimes T_i) \neq 0$, we have
$\det(\sum_{i=1}^m A_i \otimes T_i) \neq \det(\sum_{i=1}^m B_i \otimes T_i)$.
\end{proposition}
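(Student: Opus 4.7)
The starting point is the transformation law of the matrix semi-invariant. Fix $P,Q\in\GL_n$ with $B=PAQ$, and set $\alpha=\det P$, $\beta=\det Q$. For every $T\in\Mat_d^m$,
\[
L_T(B)\;=\;(P\otimes I_d)\,L_T(A)\,(Q\otimes I_d),
\qquad\text{hence}\qquad
\det L_T(B)=(\alpha\beta)^d\,\det L_T(A).
\]
Thus, whenever $\det L_T(A)\neq 0$, the inequality $\det L_T(A)\neq\det L_T(B)$ is equivalent to $(\alpha\beta)^d\neq 1$, and the proposition reduces to showing that $A\not\sim_{\SL\times\SL}B$ is equivalent to having some $d\in\{n-1,n\}$ with $(\alpha\beta)^d\neq 1$ together with a nonvanishing size-$d$ witness.

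Next I would establish the bridge $A\sim_{\SL_n\times\SL_n}B \iff \alpha\beta=1$. For the direction $\Leftarrow$, I use that $\kk$ is algebraically closed: choose $\lambda\in\kk^*$ with $\lambda^n=\alpha^{-1}$, so the rescaled pair $(\lambda P,\lambda^{-1}Q)$ still satisfies $B=(\lambda P)A(\lambda^{-1}Q)$ and has determinants $(\lambda^n\alpha,\lambda^{-n}\beta)=(1,\alpha\beta)$, which equal $(1,1)$ when $\alpha\beta=1$. For $\Rightarrow$, pick $(P',Q')\in\SL_n\times\SL_n$ realizing the equivalence; then $(R,S):=(P^{-1}P',Q'Q^{-1})$ stabilizes $A$, with $\det R\cdot\det S=(\alpha\beta)^{-1}$. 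Plugging $(R,S)$ into the transformation law shows that for every $T$ with $\det L_T(A)\neq 0$, $(\det R\det S)^d=1$; so $(\alpha\beta)^{-1}$ is a $d$-th root of unity for each size $d$ admitting such a witness. Since $A$ lies outside the null cone, witnesses exist for both $d=n-1$ and $d=n$ (see the obstacle below), and $\gcd(n-1,n)=1$ forces $(\alpha\beta)^{-1}=1$.

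Assembling the pieces: if $A\not\sim_{\SL\times\SL}B$ then $\alpha\beta\neq 1$, and because $\gcd(n-1,n)=1$ at least one of $(\alpha\beta)^{n-1},(\alpha\beta)^n$ is nontrivial; I pick that $d$, take a witness $T\in\Mat_d^m$ with $\det L_T(A)\neq 0$, and note that every such $T$ then satisfies $\det L_T(B)=(\alpha\beta)^d\det L_T(A)\neq\det L_T(A)$. Conversely, if the condition on the right-hand side holds for some $d\in\{n-1,n\}$ and some $T$, then $(\alpha\beta)^d\neq 1$ forces $\alpha\beta\neq 1$, hence $A\not\sim_{\SL\times\SL}B$ by the bridge.

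\textbf{Main obstacle.} The only nontrivial input is the \emph{size-$d$ semistability certificate}: for $A\in\Mat_n^m$ outside the null cone of $\SL_n\times\SL_n$, some $T\in\Mat_d^m$ realizes $\det L_T(A)\neq 0$ for each of $d=n-1$ and $d=n$. This is needed twice: to pin down $\det R\det S=1$ in the bridge, and to produce the rank-disparity witness in the forward direction. Without it, the ``for any such $T$'' clause could be vacuously true when $A\sim_{\SL\times\SL}B$, destroying the iff. I expect this controlled-size existence to be extracted from (or a mild refinement of) the polynomial degree bounds for matrix semi-invariants of \cite{DM17,DM20}; it is the only genuinely representation-theoretic ingredient beyond the elementary linear algebra above.
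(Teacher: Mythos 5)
Your proposal is correct and rests on the same mechanism as the paper's proof: the determinant transformation law $\det\bigl(\sum_i B_i\otimes T_i\bigr)=(\det P\det Q)^{d}\det\bigl(\sum_i A_i\otimes T_i\bigr)$ together with $\gcd(n-1,n)=1$. The only organizational difference is that the paper first absorbs the $\GL_n\times\GL_n$ witness into a scalar (writing $B\sim_{\SL\times\SL}\lambda A$ and using that $\mu A\sim_{\SL\times\SL}A$ when $\mu^n=1$, so non-equivalence means $\lambda^{n}\neq1$, hence $\lambda^{dn}\neq 1$ for some $d\in\{n-1,n\}$), whereas you prove the bridge $A\sim_{\SL\times\SL}B\iff\alpha\beta=1$ via a stabilizer argument; these are the same bookkeeping. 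Concerning your flagged obstacle: the fact that a tuple outside the null cone admits $T\in\Mat_d^m$ with $\det\bigl(\sum_i A_i\otimes T_i\bigr)\neq0$ for every $d\ge n-1$, in particular for both $d=n-1$ and $d=n$, is exactly \cite[Theorem 1.5]{IQS} (constructive noncommutative rank), which the paper itself invokes for this purpose in Section \ref{sec72}; so your assumption is a standard citation, not a gap. It is worth noting that this existence is genuinely needed only in the direction ``$A\sim_{\SL\times\SL}B\Rightarrow$ the criterion fails'' (without it the ``for any $T$'' clause could hold vacuously for one of the two sizes); your write-up makes this dependence explicit, while the paper's proof of that direction leaves it implicit.
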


\begin{proof}
$(\Rightarrow)$ Suppose $A$ and $B$ are in the same $\SL_n \times \SL_n$-orbit.
Then clearly
$\det(\sum_i A_i \otimes T_i) = \det(\sum_i B_i \otimes T_i)$ for all choices of $T$.

$(\Leftarrow)$ Observe that $A$ and $\mu A$ are in the same $\SL_n \times \SL_n$ orbit if $\mu$ is an $n$\textsuperscript{th} root of unity because $\mu I \in \SL_n$.
Now suppose $A$ and $B$ are not in the same $\SL_n \times \SL_n$-orbit, but in the same $\GL_n \times \GL_n$ orbit. Thus $\lambda A$ is in the same $\SL_n \times \SL_n$-orbit as $B$ for some $ \lambda \in \C$, where $\lambda$ is not an $n$\textsuperscript{th} root of unity. Therefore $\lambda^{dn} \neq 1$ 
for some $d\in \{n-1,n\}$. 
Take $d \in \{n-1,n\}$ such that $\lambda^{dn} \neq 1$ and choose any $T \in \Mat_{d}^m$ such that $\det(\sum_i A_i \otimes T_i) \neq 0$. Then
\[
\det(\textstyle\sum_i B_i \otimes T_i) = 
\det(\textstyle\sum_i \lambda A_i \otimes T_i) = 
\lambda^{dn} \det(\textstyle\sum_i A_i \otimes T_i) \neq 
\det(\textstyle\sum_i A_i \otimes T_i).
\qedhere
\]
\end{proof}

\subsubsection{The general case} 
The matter of $\SL_p\times\SL_q$ equivalence of two points in $\Mat_{p,q}^m$ splits into two parts: the $\GL_p\times\GL_q$ equivalence in $\Mat_{p,q}^m$ (Theorem \ref{t:leftright}), and the $\SL_p\times\SL_q$ equivalence of $A$ and $\lambda A$ for $A\in\Mat_{p,q}^m$ and $\lambda\in\C$. In this section we analyze the second part.

\begin{lemma}\label{l:ab}
Let $A = (A_1,\dots,A_m)\in \Mat_{p,q}^m$ and suppose that $A_i = (\begin{smallmatrix} P_i & 0 \\ 0 & Q_i \end{smallmatrix})$ for each $i$ where $P_i$ is of size $k \times \ell$ and $Q_i$ of size $(p-k) \times (q-\ell)$. If $p\ell\neq qk$, then $A$ and $\lambda A$ are in the same $\SL_p \times \SL_q$-orbit for every $0 \neq \lambda \in \C$.
\end{lemma}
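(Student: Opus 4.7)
The plan is to search for the desired equivalence $(S,T)\in\SL_p\times\SL_q$ carrying $A$ to $\lambda A$ among \emph{block-scalar} pairs
\[
S=\begin{pmatrix}\alpha I_k & 0\\ 0 & \beta I_{p-k}\end{pmatrix},\qquad T=\begin{pmatrix}\gamma I_\ell & 0\\ 0 & \delta I_{q-\ell}\end{pmatrix},
\]
with $\alpha,\beta,\gamma,\delta\in\kk^\times$. Since each $A_i$ is block diagonal with compatible partition of rows and columns, one computes
\[
SA_iT=\begin{pmatrix}\alpha\gamma P_i & 0\\ 0 & \beta\delta Q_i\end{pmatrix},
\]
so the system $SA_iT=\lambda A_i$ for all $i$ reduces at once to the two scalar conditions $\alpha\gamma=\lambda$ and $\beta\delta=\lambda$.

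Next I would record the determinant constraints $\alpha^k\beta^{p-k}=1$ and $\gamma^\ell\delta^{q-\ell}=1$, then eliminate $\gamma=\lambda/\alpha$ and $\delta=\lambda/\beta$ from the second relation to obtain an equivalent system in $(\alpha,\beta)\in(\kk^\times)^2$:
\[
\alpha^k\beta^{p-k}=1,\qquad \alpha^\ell\beta^{q-\ell}=\lambda^q.
\]
The task then becomes showing that $(1,\lambda^q)$ lies in the image of the algebraic group homomorphism $\phi:(\kk^\times)^2\to(\kk^\times)^2$ defined by $\phi(\alpha,\beta)=(\alpha^k\beta^{p-k},\,\alpha^\ell\beta^{q-\ell})$.

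The key step is to observe that the integer exponent matrix $\bigl(\begin{smallmatrix}k & p-k\\ \ell & q-\ell\end{smallmatrix}\bigr)$ has determinant $kq-p\ell$, which is nonzero by hypothesis. Consequently $\phi$ is an isogeny of two-dimensional tori over the algebraically closed field $\kk$, hence surjective, so a suitable $(\alpha,\beta)$ exists. Degenerate configurations in which one of the four blocks is empty ($k\in\{0,p\}$ or $\ell\in\{0,q\}$) absorb one of the scalars trivially; the hypothesis $p\ell\neq qk$ is exactly what keeps the relevant exponent submatrix nondegenerate and the remaining system solvable.

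The main obstacle is not the computation but rather choosing the right ansatz: the block-scalar form is the natural replacement for ``multiplication by $\lambda$'' once one notices the block structure of $A$, and everything afterwards is linear algebra on the character lattice of a torus, with the hypothesis $p\ell\neq qk$ entering precisely as the nondegeneracy of a $2\times 2$ integer exponent matrix.
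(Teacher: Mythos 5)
Your proof is correct and follows essentially the same route as the paper: both search for the equivalence among block-scalar pairs $\alpha I_k\oplus\beta I_{p-k}$, $\gamma I_\ell\oplus\delta I_{q-\ell}$, with the hypothesis $p\ell\neq qk$ entering as nondegeneracy of the resulting exponent system. The only difference is that the paper writes the solution explicitly (choose $\mu$ with $\mu^{p\ell-qk}=\lambda$ and take $D_1=\mu^{(p-k)q}I_k\oplus\mu^{-kq}I_{p-k}$, $D_2=\mu^{p(\ell-q)}I_\ell\oplus\mu^{p\ell}I_{q-\ell}$), whereas you deduce existence abstractly from surjectivity of the torus isogeny.
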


\begin{proof}
Choose $\mu$ such that $\mu^{p\ell-qk} = \lambda$.
Now let
$D_1 =\mu^{(p-k)q} I_k \oplus \mu^{-kq} I_{p-k}$ and
$D_2 = \mu^{p(\ell-q)} I_\ell \oplus \mu^{p\ell} I_{q-\ell}$.
Then $D_1\in \SL_p$, $D_2\in\SL_q$ and $D_1 A D_2 = \mu^{p\ell-qk} A = \lambda A$.
\end{proof}

\begin{lemma}\label{l:ab2}
Let $A \in\Mat_{p,q}^m$ and consider the corresponding $\Lambda_m$-module $N_A$.
Then the $\GL_p\times\GL_q$-orbit of $A$ contains $(\begin{smallmatrix} P & 0 \\ 0 & Q \end{smallmatrix})$ where $P\in\Mat_{k,\ell}^m$ and $Q\in\Mat_{p-k,q-\ell}^m$ with $p\ell \neq qk$ if and only if $N_A$ has a direct summand whose dimension vector is not parallel to $(p,q)$.
\end{lemma}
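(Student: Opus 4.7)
The plan is to reduce the statement to a dictionary between $\GL_p\times\GL_q$-orbits in $\Mat_{p,q}^m$ and isomorphism classes of $\Lambda_m$-modules with dimension vector $(p,q)$, exploiting the fact that the idempotent $e$ is already part of the path algebra $\Lambda_m$.

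First, I would record the key observation that any $\Lambda_m$-linear map out of $N_A$ automatically commutes with $e$ and therefore preserves the canonical vector-space splitting $N_A=eN_A\oplus(1-e)N_A=\kk^p\oplus\kk^q$. Consequently, a $\Lambda_m$-module isomorphism $N_A\cong N_{A'}$ (for two tuples of the same size) restricts to a pair of linear isomorphisms on the $e$- and $(1-e)$-parts, i.e.\ to an element $(g,h)\in\GL_p\times\GL_q$ transforming $A$ into $A'$, exactly as already recalled before Lemma~\ref{l:m2t2}. In particular, a direct sum decomposition $N_A\cong M'\oplus M''$ with dimension vectors $(k,\ell)$ and $(p-k,q-\ell)$ is equivalent to the existence of $(g,h)\in\GL_p\times\GL_q$ bringing $A$ into block-diagonal form $\bigl(\begin{smallmatrix}P&0\\0&Q\end{smallmatrix}\bigr)$ with $P\in\Mat_{k,\ell}^m$ and $Q\in\Mat_{p-k,q-\ell}^m$, where the modules attached to $P$ and $Q$ are $M'$ and $M''$ respectively.

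Second, I would translate the arithmetic side: the vectors $(k,\ell)$ and $(p,q)$ are parallel precisely when $p\ell=qk$, and of course $(k,\ell)$ is parallel to $(p,q)$ if and only if $(p-k,q-\ell)$ is. Thus the condition $p\ell\ne qk$ is exactly the condition that either (equivalently both) of the summands in a putative block decomposition has dimension vector not parallel to $(p,q)$.

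With the dictionary in hand, both directions are immediate. For $(\Rightarrow)$, the existence of a block form with $p\ell\ne qk$ produces a direct summand of $N_A$ of dimension vector $(k,\ell)$, which is not parallel to $(p,q)$. For $(\Leftarrow)$, a direct summand $M'$ of $N_A$ with non-parallel dimension vector $(k,\ell)$ extends to a decomposition $N_A=M'\oplus M''$; picking bases of $eN_A$ and $(1-e)N_A$ adapted to the splitting yields $(g,h)\in\GL_p\times\GL_q$ putting $A$ into block-diagonal form of the required shape, and the non-parallelism gives $p\ell\ne qk$. I do not anticipate a genuine obstacle: the only delicate point is the very first step, ensuring that module isomorphisms respect the $\kk^p\oplus\kk^q$ decomposition, and this is forced by $\Lambda_m$-linearity alone.
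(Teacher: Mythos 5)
Your proof is correct, and it simply spells out the standard dictionary (module isomorphisms respect the $eN\oplus(1-e)N$ splitting, so block-diagonal forms in the $\GL_p\times\GL_q$-orbit correspond exactly to direct sum decompositions of $N_A$, and $p\ell\neq qk$ is precisely non-parallelism of $(k,\ell)$, equivalently of $(p-k,q-\ell)$, to $(p,q)$) that the paper dismisses as ``Straightforward.'' So it is the same argument, just written out in full.
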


\begin{proof}
Straightforward.
\end{proof}

\def\lcm{{\rm lcm}}

\begin{lemma}\label{l:aa}
Suppose $A\in\Mat_{p,q}^m$ and let $N_A$ be the corresponding $\Lambda_m$-module. Suppose that all the indecomposable direct summands in $N_A$ have dimension vectors parallel to $(p,q)$. Then $A$ and $\lambda A$ are in the same $\SL_p \times \SL_q$-orbit if and only if $\lambda$ is an $\lcm(p,q)$\textsuperscript{th} root of unity.
\end{lemma}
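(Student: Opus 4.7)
The plan is to handle the two directions separately, with the easy one being a direct scaling argument and the hard one reducing to a character computation on $\mathrm{Aut}(N_A)$. For the easy direction ($\Leftarrow$), suppose $\lambda^L = 1$ with $L = \lcm(p,q)$. The subgroup of $\kk^*$ generated by the $p$-th and $q$-th roots of unity is precisely the group of $L$-th roots of unity, so one can write $\lambda = \mu\nu$ with $\mu^p = 1$ and $\nu^q = 1$; then $D_1 := \mu I_p \in \SL_p$ and $D_2 := \nu I_q \in \SL_q$ satisfy $D_1 A D_2 = \lambda A$.

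For the hard direction ($\Rightarrow$), suppose $D_1 A D_2 = \lambda A$ with $(D_1, D_2) \in \SL_p \times \SL_q$. Translating to modules, the pair $(\phi_1, \phi_2) := (\lambda^{-1} D_1, D_2^{-1})$ lies in $\mathrm{Aut}(N_A)$, and the conditions $\det D_1 = \det D_2 = 1$ become $\det \phi_1 = \lambda^{-p}$ and $\det \phi_2 = 1$. Thus the task reduces to showing that the pair of determinant characters $(\chi_1, \chi_2) : \mathrm{Aut}(N_A) \to \kk^* \times \kk^*$ defined by $\chi_j(\phi) = \det \phi_j$ attains the value $(\lambda^{-p}, 1)$ only when $\lambda^L = 1$. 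To compute the image, decompose $N_A = \bigoplus_i M_i^{n_i}$ by Krull--Remak--Schmidt, where each indecomposable $M_i$ has, by hypothesis, dimension vector $(a_i p/g, a_i q/g)$ for some $a_i \geq 1$, with $g = \gcd(p,q)$.

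Since $\kk$ is algebraically closed, $\End(M_i)$ is local with residue field $\kk$, so every element is a scalar plus a nilpotent; the restrictions of such a nilpotent to the two components of $M_i$ are nilpotent matrices, which immediately yields $(\chi_1, \chi_2)(c \cdot \mathrm{id}_{M_i} + \eta) = (c^{a_i p/g}, c^{a_i q/g})$. Extending to $\mathrm{Aut}(M_i^{n_i}) = \GL_{n_i}(\End(M_i))$ via reduction modulo the radical (elements of the form $I + J$ with $J$ in the radical act unipotently on both components, hence have determinant one), $\chi_j$ factors through $\GL_{n_i}(\kk)$ as a power of the usual determinant. Combining over all $i$ and using divisibility of $\kk^*$, the image of $(\chi_1, \chi_2)$ collapses to the one-parameter subgroup $\{(D^{p/g}, D^{q/g}) : D \in \kk^*\}$. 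Finally, $(\lambda^{-p}, 1)$ lies in this subgroup iff some $D$ satisfies both $D^{q/g} = 1$ and $D^{p/g} = \lambda^{-p}$; since $\gcd(p/g, q/g) = 1$, the map $D \mapsto D^{p/g}$ permutes the $(q/g)$-th roots of unity, so this happens iff $\lambda^{-p}$ is itself a $(q/g)$-th root of unity, i.e., $\lambda^L = 1$. The main technical hurdle is the character computation: the dimension-vector-parallel hypothesis is exactly what forces each indecomposable to contribute a character of the uniform form $(\bullet^{p/g}, \bullet^{q/g})$, collapsing the a priori two-parameter image to a single one-parameter subgroup.
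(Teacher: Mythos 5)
Your argument is correct, but the hard direction takes a genuinely different route from the paper. The paper works entirely with linear algebra on the pair $(P,Q)$ itself: it replaces $P,Q$ by their semisimple parts via the Jordan--Chevalley decomposition of $P\otimes Q^{\tra}$, diagonalizes, observes that the eigenvalue condition $\alpha_i\beta_j=\lambda$ forces a block-permutation structure whose blocks are direct summands of $N_A$, invokes the parallel-dimension-vector hypothesis to pin the block sizes to multiples of $(p/g,q/g)$, and finishes with the determinant computation. You instead transport $D_1AD_2=\lambda A$ into an automorphism $(\lambda^{-1}D_1,D_2^{-1})$ of $N_A$ and compute the image of the two determinant characters on $\Aut(N_A)$ via Krull--Remak--Schmidt, locality of $\End(M_i)$ with residue field $\kk$, and unipotence of $1+\rad$; the hypothesis enters exactly where it does in the paper, forcing each indecomposable to contribute $(c^{a_ip/g},c^{a_iq/g})$, so the image lies in $\{(D^{p/g},D^{q/g})\}$ and $(\lambda^{-p},1)$ in this subgroup gives $\lambda^{\lcm(p,q)}=1$. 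Your approach is more conceptual and yields a slightly stronger structural fact (the exact image of the character pair), at the cost of more representation-theoretic machinery; the paper's is elementary and self-contained. Two steps you gloss over deserve a sentence each: ``combining over all $i$'' is not literally a product decomposition, since $\Aut(N_A)\neq\prod_i\Aut(M_i^{n_i})$ because of homomorphisms between non-isomorphic summands --- but these lie in $\rad\End(N_A)$, which is nilpotent, so your unipotence argument shows $\chi_1,\chi_2$ kill $1+\rad\End(N_A)$ and factor through $\End(N_A)/\rad\cong\prod_i\Mat_{n_i}(\kk)$; and the claim that the induced character on $\GL_{n_i}(\kk)$ is the $a_ip/g$ (resp. $a_iq/g$) power of $\det$ should be checked on generators (diagonal matrices via scalar lifts, elementary matrices via unipotent lifts) rather than asserted, since abstract characters of $\GL_{n_i}(\kk)$ need not a priori be power maps. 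The easy direction via $\mu_p\cdot\mu_q=\mu_{\lcm(p,q)}$ is equivalent to the paper's B\'ezout choice $ap'+bq'=1$.
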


\begin{proof}
Let $p'=\frac{\lcm(p,q)}{q}$ and $q'=\frac{\lcm(p,q)}{p}$. 

$(\Leftarrow)$ Suppose $\lambda$ is an $\lcm(p,q)$\textsuperscript{th} root of unity.
If $a,b\in\Z$ are such that $ap'+bq'=1$, then $\lambda^{bq'}I_p\in \SL_p$, $\lambda^{ap'}I_q\in \SL_q$ and
$(\lambda^{bq'}I_p) A (\lambda^{ap'}I_q) =\lambda A$.

$(\Rightarrow)$ Suppose there exists $(P,Q) \in \SL_p \times \SL_q$ such that 
$PAQ = \lambda A$. 
Consider the linear map $L = L_{P,Q} : \Mat_{p,q} \to \Mat_{p,q}$ given by $L(X) = P X Q$. Since each $A_i$ is an eigenvector of $L$, it is also an eigenvector of $L^{\rm ss}$, the semisimple part of $L$ (from the Jordan--Chevalley decomposition). The map $L_{P,Q}$ is represented by the matrix $P \otimes Q^\tra$. 
Then $L^{\rm ss}$ is represented by $(P \otimes Q^\tra)^{\rm ss} = P^{\rm ss} \otimes (Q^{\tra})^{\rm ss}$, hence
$L^{\rm ss}=L_{P^{\rm ss}, Q^{\rm ss}}$. 
Since $P^{\rm ss}$ and $Q^{\rm ss}$ have the same determinant as $P$ and $Q$, 
we have $(P^{\rm ss},Q^{\rm ss}) \in \SL_p \times \SL_q$ and 
$P^{\rm ss}AQ^{\rm ss} = \lambda A$. Thus, without loss of generality, we can assume $P$ and $Q$ are semisimple.

We can then write $P = g D_1 g^{-1}$ and 
$Q= h D_2 h^{-1}$ for some $g\in\GL_p,h\in\GL_q$
and $D_1,D_2$ that are diagonal;
$D_1 = \alpha_1 I_{p_1}\oplus\cdots\oplus \alpha_k I_{p_k}$
with pairwise distinct $\alpha_i$
and 
$D_2 = \beta_1 I_{q_1}\oplus\cdots\oplus \beta_\ell I_{q_\ell}$
with pairwise distinct $\beta_j$.
Then $D_1 A'_i D_2 = \lambda A'_i$, where $A'_i = g^{-1} A_i h$. It is straightforward to see that the dimension vectors of the indecomposable summands of $A' = (A'_1,\dots, A'_m)$ are the same as for $A$ because $N_A \cong N_{A'}$.

Next we split each $A'_t$ into a $k \times \ell$ block matrix, where the $(i,j)$ block has size $p_i \times q_j$. Then left and right multiplication by $D_1$ and $D_2$ scales the $(i,j)$ block by $\alpha_i \beta_j$. So if this block is nonzero, we must have $\alpha_i \beta_j = \lambda$. 
Since the $\alpha_i$s are distinct and the $\beta_j$s are distinct, 
only one block in each block row and block column can be nonzero
(and this holds across all $A'_t$s simultaneously). 
In particular, each such block corresponds to a direct summand of $N_{A'}$, so our hypothesis on the dimension vectors of indecomposable summands implies $pq_j=qp_i$.
Moreover, an entire block column (resp. block row) cannot be zero because that yields a direct summand of dimension $(1,0)$ (resp. $(0,1)$), which contradicts the hypothesis. So we conclude that $k = \ell$ and, after a permutation of block rows,
$p_i = d_ip'$ and $q_i=d_iq'$ for some $d_i\in\N$. 

Then
\begin{align*}
1&=\det(D_1)=\prod_i \alpha_i^{p_i}=\Big(\prod_i \alpha_i^{d_i}\Big)^{p'},\\
1&=\det(D_2)=\prod_i \beta_i^{q_i}
=\Big(\prod_i \left(\frac{\lambda}{\alpha_i}\right)^{d_i}\Big)^{q'}
=\lambda^{q}\Big(\prod_i \alpha_i^{d_i}\Big)^{-q'},
\end{align*}
whence
\[\lambda^{\lcm(p,q)} = \lambda^{qp'}
=\Big(\prod_i \alpha_i^{d_i}\Big)^{p'q'}=1. \qedhere\]
\end{proof}

Specializing \cite[Theorem 8.1.3]{DW}
to the $m$-Kronecker quiver gives the following.

\begin{proposition} \label{prop:dim-indec-summand}
Let $A \in \Mat_{p,q}^m$, and let $N_A$ be the corresponding $\Lambda_m$-module. Then all the indecomposable direct summands of $N_A$ have dimension vectors parallel to $(p,q)$ if and only if there exists $C \in \Mat_{q,p}^m$ such that 
$\sum_{i=1}^m A_i C_i = qI_p$ and $\sum_{i=1}^m C_i A_i = pI_q$.
\end{proposition}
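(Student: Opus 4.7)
The plan is to recast the existence of $C$ as a linear-algebra problem, use trace-pairing duality to translate it into a trace identity for endomorphisms of $N_A$, and then invoke Krull--Schmidt together with the fact that nilpotent operators are traceless.

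Concretely, I would define
\[
T_0\colon \Mat_{q,p}^m \to \Mat_p \oplus \Mat_q, \quad T_0(C) = \Big(\sum_i A_i C_i,\ \sum_i C_i A_i\Big),
\]
so the existence of $C$ is equivalent to $(qI_p, pI_q)$ lying in the image of $T_0$. By elementary annihilator duality (with respect to trace pairings on both spaces), this holds if and only if every $(X, Y) \in \ker T_0^*$ satisfies $q\tr(X) + p\tr(Y) = 0$. A short cyclicity-of-trace computation identifies $T_0^*(X,Y)_i = XA_i + A_iY$, so $\ker T_0^*$ consists of pairs $(X,Y)$ with $XA_i = A_i(-Y)$ for all $i$. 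Setting $Z = -Y$, this is exactly the condition that $(X, Z)$ be a $\Lambda_m$-endomorphism of $N_A$ (acting as $X$ on the vertex-$1$ part $\kk^p$ and as $Z$ on the vertex-$2$ part $\kk^q$), so the target becomes the trace identity $q\tr(X) = p\tr(Z)$ for every $(X, Z) \in \End(N_A)$.

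Next I would apply Krull--Schmidt: write $N_A = \bigoplus_k N^{(k)}$ with each $N^{(k)}$ indecomposable of dimension vector $(p_k, q_k)$. Any endomorphism is a block matrix $(\phi_{jk})$ with $\phi_{jk} \in \Hom(N^{(k)}, N^{(j)})$; off-diagonal entries contribute nothing to either trace, while each $\phi_{kk} \in \End(N^{(k)})$, a local ring, decomposes as $\lambda_k\,\mathrm{id} + \nu_k$ with $\nu_k$ nilpotent. The restriction of $\nu_k$ to either vertex remains nilpotent and is therefore traceless, so $\tr(X) = \sum_k \lambda_k p_k$ and $\tr(Z) = \sum_k \lambda_k q_k$. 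Hence
\[
q\tr(X) - p\tr(Z) = \sum_k \lambda_k(q p_k - p q_k),
\]
which vanishes for every endomorphism (equivalently, for every choice of scalars $\lambda_k$) if and only if $qp_k = pq_k$ for every $k$, i.e., every indecomposable summand of $N_A$ has dimension vector parallel to $(p,q)$.

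The main subtlety will be verifying the adjoint formula $T_0^*(X,Y)_i = XA_i + A_iY$, but this follows by rewriting each of $\tr(XA_iC_i)$ and $\tr(YC_iA_i)$ as $\tr\bigl((\cdot)\,C_i\bigr)$ via cyclicity. Alternatively, as the paper itself notes, the proposition falls out directly from Derksen--Weyman's Theorem 8.1.3 on semi-invariants of quiver representations.
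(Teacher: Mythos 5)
Your argument is correct, but it takes a genuinely different route from the paper, which proves this proposition in one line by specializing a known quiver-theoretic result, \cite[Theorem 8.1.3]{DW} (in the vein of the lifting criterion for deformed preprojective algebras), to the $m$-Kronecker quiver. Your version is a self-contained replacement: the adjoint computation $T_0^*(X,Y)_i=XA_i+A_iY$ is right, the identification of $\ker T_0^*$ (after setting $Z=-Y$) with $\End(N_A)$ agrees with the description of homomorphisms underlying Lemma \ref{l:m2t2}, and the Krull--Schmidt/local-ring step correctly reduces the trace identity $q\tr(X)=p\tr(Z)$ to $qp_k=pq_k$ for each indecomposable summand --- in effect you have reconstructed, for this special quiver, the standard proof that the citation encapsulates. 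What your route buys is explicitness and independence from the reference (it also exhibits concrete witnesses: the idempotent projections onto an offending summand); what the citation buys is brevity. Two hypotheses enter your argument silently and deserve a mention: writing $\phi_{kk}=\lambda_k\,\mathrm{id}+\nu_k$ with $\nu_k$ nilpotent uses $\End(N^{(k)})/\rad\cong\kk$, i.e.\ the standing assumption of this subsection that $\kk$ is algebraically closed; and the converse direction only yields $qp_k-pq_k=0$ as an element of $\kk$, so genuine parallelism requires characteristic zero (in characteristic $\ell$ the statement itself can fail, e.g.\ $A=0\in\Mat_{\ell,\ell}^m$ satisfies the equations with $C=0$), which is consistent with the paper's intended use over $\C$.
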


The $\SL_p\times\SL_q$ equivalence of $A$ and $\lambda A$ is thus summarized as follows.

\begin{corollary}\label{c:lam}
Let $A\in \Mat_{p,q}^m$ and $\lambda\in\C$. Then $A$ and $\lambda A$ lie in the same  $\SL_p\times\SL_q$-orbit if and only if one of the following conditions hold:
\begin{enumerate}[\rm(a)]
    \item $A=0$;
    \item $\lambda^{\lcm(p,q)}=1$;
    \item $\lambda\neq0$ and no $C\in\Mat_{q,p}^m$ satisfies
$\sum_{i=1}^m A_i C_i = qI_p$ and $\sum_{i=1}^m C_i A_i = pI_q$.
\end{enumerate}
\end{corollary}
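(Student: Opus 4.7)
The plan is to assemble the conclusion from the ingredients already established: Proposition \ref{prop:dim-indec-summand}, which reformulates the ``dimension vectors parallel to $(p,q)$'' condition as the existence of the tuple $C$; Lemma \ref{l:aa}, which handles the case when all indecomposable summands of $N_A$ are parallel to $(p,q)$; and Lemmas \ref{l:ab}--\ref{l:ab2}, which cover the complementary case. The only genuine bookkeeping is to track the trivial cases $A=0$ and $\lambda=0$, and to check that the three conditions (a), (b), (c) are the correct disjunction.

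For the direction ``$\Leftarrow$'': If (a) holds, then $A=0=\lambda A$, so the two tuples are trivially in the same orbit. If (b) holds (with $\lambda\ne 0$), I would use the construction in the $(\Leftarrow)$ part of Lemma \ref{l:aa}: taking $p'=\lcm(p,q)/q$, $q'=\lcm(p,q)/p$ and integers $a,b$ with $ap'+bq'=1$, the matrices $\lambda^{bq'} I_p\in\SL_p$ and $\lambda^{ap'} I_q\in\SL_q$ witness $\lambda A\in\SL_p\cdot A\cdot\SL_q$. If (c) holds, then by Proposition \ref{prop:dim-indec-summand} the module $N_A$ has some indecomposable direct summand whose dimension vector is not parallel to $(p,q)$, so by Lemma \ref{l:ab2} the $\GL_p\times\GL_q$-orbit of $A$ contains a block-diagonal tuple $\bigl(\begin{smallmatrix}P&0\\0&Q\end{smallmatrix}\bigr)$ with $P\in\Mat_{k,\ell}^m$, $Q\in\Mat_{p-k,q-\ell}^m$, and $p\ell\ne qk$. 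Lemma \ref{l:ab} then yields, for every nonzero $\lambda$, elements $D_1\in\SL_p$, $D_2\in\SL_q$ with $D_1(\cdot)D_2=\lambda(\cdot)$ on the block tuple; conjugating back produces the required pair in $\SL_p\times\SL_q$ for $A$ itself.

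For the direction ``$\Rightarrow$'': Assume $\lambda A$ and $A$ lie in the same $\SL_p\times\SL_q$-orbit. If $A=0$ we are in case (a). Otherwise $A\ne 0$ forces $\lambda A\ne 0$, so $\lambda\ne 0$. Now I split on whether (c) holds. If (c) fails, then by Proposition \ref{prop:dim-indec-summand} every indecomposable direct summand of $N_A$ has dimension vector parallel to $(p,q)$, and then Lemma \ref{l:aa} gives exactly $\lambda^{\lcm(p,q)}=1$, which is (b). If (c) does not fail, then (c) holds by definition. Either way, one of (a), (b), (c) is satisfied.

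I do not foresee any real obstacle here, since the nontrivial work has been done in the preceding lemmas and proposition; the only subtlety is to notice that cases (b) and (c) can overlap (when all summands are parallel to $(p,q)$ and $\lambda^{\lcm(p,q)}=1$, both (b) and a suitable reading hold; here (c) genuinely fails only when such a $C$ exists, so the disjunction is covered), and to record cleanly that $\lambda\ne 0$ is automatic once $A\ne 0$. The proof is then a short paragraph invoking Proposition \ref{prop:dim-indec-summand} together with Lemmas \ref{l:ab}, \ref{l:ab2}, \ref{l:aa} in the case analysis above.
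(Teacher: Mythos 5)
Your proof is correct and follows essentially the same route as the paper: it assembles the statement from Lemmas \ref{l:ab}, \ref{l:ab2}, \ref{l:aa} and Proposition \ref{prop:dim-indec-summand}, after disposing of the trivial cases $A=0$ and $\lambda=0$. You merely make explicit two steps the paper leaves implicit (the conjugation transferring the $\SL_p\times\SL_q$ scaling from the block-diagonal representative back to $A$, and the scalar matrices witnessing case (b)), which is fine.
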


\begin{proof}
The case $A=0$ is trivial. Thus we can assume that $A\neq 0$ and $\lambda\neq 0$. If $N_A$ admits an irreducible direct summand with dimension vector not parallel to $(p,q)$, then by Lemmas \ref{l:ab2}, \ref{l:ab} and Proposition \ref{prop:dim-indec-summand}, $A$ and $\lambda A$ are in the same orbit if and only if (c) holds. Otherwise, $A$ and $\lambda A$ are in the same orbit if and only if (b) holds by Lemma \ref{l:aa}.
\end{proof}

\section{Rank inequalities and orbit closure}\label{sec6}

In view of Proposition \ref{p:matpoly}, the Hadwin--Larson conjecture \cite[Conjecture 2]{HL} asks whether the following are equivalent for $A,B\in\Mat_n^m$:
\begin{enumerate}[\rm(a)]
	\item $A$ lies in the closure of the $\GL_n$-orbit of $B$;
	\item for all $N\in\N$ and $T=(T_0,\dots,T_m)\in \Mat_{N}^{m+1}$,
	$$\rk\left(I\otimes T_0 + A_1\otimes T_1+ \cdots A_m\otimes T_m\right) \le
	\rk\left(I\otimes T_0 + B_1\otimes T_1+ \cdots B_m\otimes T_m\right).$$
\end{enumerate}
Note that (a)$\Rightarrow$(b) is clear because the rank of a matrix is lower semi-continuous.
In this section we present an explicit counterexample to the Hadwin--Larson conjecture. In the language of degenerations of modules, it was given by Carlson \cite[Section 3.1]{Rie} (see also \cite[Section 7.2]{Bon1}) to distinguish between degenerations and virtual degenerations. Here we concretize it in terms of matrix tuples.

Let
$$
B_1=
\begin{pmatrix}
0&0&0&0\\
1&0&0&0\\
0&0&0&0\\
0&0&1&0
\end{pmatrix}, \quad
B_2=
\begin{pmatrix}
0&0&0&0\\
0&0&0&0\\
1&0&0&0\\
0&1&0&0
\end{pmatrix},\quad
A_1=A_2=B_1.
$$

First we claim that $(a)$ fails for $A$ and $B$. That is, $A$ is not in the closure of the $\GL_4$-orbit of $B$. Let $x_{ij},y_{ij}$ be the coordinates of the affine space $\Mat_4^2$, and
$$p=x_{43} y_{21}-x_{41} y_{23}-x_{23} y_{41}+x_{21} y_{43}.$$
A direct calculation shows that $p(PB_1P^{-1},PB_2P^{-1})=0$ for every $P\in\GL_4$, and $p(A_1,A_2)=2$.

On the other hand, $A\oplus 0_1$ lies in the closure of the $\GL_5$-orbit of $B\oplus 0_1$ (the argument in \cite[Section 3.1]{Rie} implies that $A\oplus 0_2$ lies in the $\GL_6$-orbit closure of $B\oplus 0_2$).
Indeed, for $t\neq0$ let
$$P_t = \begin{pmatrix}
1 & 0 & 0 & 0 & 0  \\
0 & 1 & 1 & 0 & 1  \\
0 & 0 & t^2 & 0 & 0  \\
0 & 0 & 0 & t^2 & 0  \\
0 & t & -t & 0 & 0 
\end{pmatrix} \in \GL_5.$$
Then
$$P_t(B\oplus 0_1)P_t^{-1}=
\left(
\begin{pmatrix}
0&0&0&0&0\\
1&0&0&0&0\\
0&0&0&0&0\\
0&0&1&0&0\\
t&0&0&0&0
\end{pmatrix},
\begin{pmatrix}
0&0&0&0&0\\
1&0&0&0&0\\
t^2&0&0&0&0\\
0&0&1&0&t\\
-t&0&0&0&0
\end{pmatrix}
\right)
$$
and so $A\oplus 0_1=\lim_{t\to0} P_t(B\oplus 0_1)P_t^{-1}$. Therefore (b) holds for $A\oplus 0_1$ and $B\oplus 0_1$, and consequently also for $A$ and $B$.

The above example indicates that (a) above should be replaced by
\begin{enumerate}
    \item[(a')] for some $\ell\in\N$, $A\oplus 0_\ell$ lies in the closure 
    of the $\GL_{n+\ell}$-orbit of $B\oplus 0_\ell$.
\end{enumerate}
The problem of equivalence of (a') and (b) has a counterpart in representation theory.
There it is the open question of whether the virtual degeneration order and the hom order are equivalent \cite[Section 5]{Sma} (more precisely, virtual degeneration allows for the zero tuple $0_\ell$ in (a') to be replaced by an arbitrary $C\in\Mat_\ell^m$).

\section{Algorithms}\label{sec7}

In this section we give algorithms pertaining to the main results of the paper.
A deterministic polynomial time algorithm for testing orbit equivalence under similarity by $\GL_n$ and the left-right action by $\GL_p \times \GL_q$ 
is a special case of the Brooksbank--Luks algorithm for testing isomorphism of finite-dimensional modules over a finitely generated algebra \cite[Theorem 3.5]{BL08} (incidentally, Algorithm \ref{a:wit} below also tests similarity, although this is not its chief purpose). There is also a very straightforward probabilistic procedure for testing similarity: given $A,B\in\Mat_n^m$, choose a random solution $P\in\Mat_n$ of the linear system $BP=PA$; then $A$ and $B$ are similar if and only if $P$ is invertible.

\subsection{Constructing a rank-disparity witness}\label{sec71}
Here we describe how, given a pair of non-similar tuples $A, B \in \Mat_{n}^m$, one can produce a tuple that witnesses the violation of the rank equality condition \eqref{e:rank}.
\begin{algorithm}\label{a:wit}
Construction of a rank-disparity witness.\\
\noindent{\bf Input:} $A, B \in \Mat_{n}^m$.
\begin{description}
\setlength\itemsep{.5em}

\item[Step 1] Construct the finite sequence of modules $L_{i+1}=\rad(\End L_i)\cdot L_i$. \\{\rm 
Determining the endomorphism ring of a finite-dimensional module over a finite-dimensional algebra amounts to solving a linear system, and likewise for determining the radical of a finite-dimensional algebra \cite[Corollary 4.3]{FR}.
}
	
\item[Step 2] Find indecomposable summands in each $L_i$. 
\\{\rm 
This is done by determining the ranges of centrally primitive idempotents of the semisimple part (as in the Wedderburn principal theorem) of the algebra $\End L_i$ \cite[Theorem 6]{CIK}.
}
	
\item[Step 3] For each indecomposable module $M_C$ from Step 2, construct a matrix tuple $T$ as in \eqref{e:c2t}. 
\\{\rm 
By Proposition \ref{p:fns} and the proof of Theorem \ref{t:main}, either one of them violates \eqref{e:rank} (in which case $A$ and $B$ are not similar), or $A$ and $B$ are similar.
}
\end{description}
\end{algorithm}

\begin{example}
In the counterexample \cite[Example 5]{HL} to the Curto--Herrero conjecture \cite[Conjecture 8.14]{CH}  it is shown that the pairs of $3\times 3$ matrices $A=(E_{12},E_{13})$ and $B=(E_{21},E_{31})$ satisfy $\rk f(A)=\rk f(B)$ for all $f\in \kk\Langle x_1,x_2\Rangle$, and
$\rk (I\otimes T_0+A_1\otimes T_1+A_2\otimes T_2)\neq \rk (I\otimes T_0+B_1\otimes T_1+B_2\otimes T_2)$ for
$$T_0=0_2,\quad
T_1=\begin{pmatrix}
1 & 0 \\ 0 & 0
\end{pmatrix},\quad
T_2=\begin{pmatrix}
0 & 0 \\ 1 & 0
\end{pmatrix}.$$
This concrete witness $T\in \Mat_2^3$ arises from 
the 1-dimensional module $M_{(0_1,0_1)}$
which is a direct summand in the 5-dimensional module $L_1$ as per Algorithm \ref{a:wit}. Both 3-dimensional indecomposable summands of $L_0$ (namely, $M_A$ and $M_B$) also give rank-disparity witnesses (in $\Mat_6^3$).
\end{example}

\subsection{Deciding orbit equivalence 
for the \texorpdfstring{$\SL_p \times \SL_q$}{SLpxSLq} action}\label{sec72}

We give two algorithms for testing $\SL_p \times \SL_q$ equivalence, one for points outside the null cone when $p=q$, and one for general points. Note that there is a deterministic polynomial time algorithm for the null cone membership \cite[Theorem 1.5]{IQS}.

Proposition \ref{p:notnull} leads to the following procedure.

\begin{algorithm} $\SL_n\times \SL_n$ equivalence outside the null cone.\\
\noindent{\bf Input:}  $A, B \in \Mat_{n}^m$, not in the null cone.
\begin{description}
\setlength\itemsep{.5em}
\item[Step 1] Check whether $A$ and $B$ are $\GL_n\times\GL_n$-equivalent by applying  \cite[Theorem 3.5]{BL08}. 
If they are not, then $A$ and $B$ are not $\SL_n\times \SL_n$-equivalent.
Otherwise, proceed to Step 2.

\item[Step 2] Using \cite[Theorem 1.5]{IQS}, find $T \in \Mat_{n-1}^m$ and $T' \in \Mat_n^m$ such that $\det(\sum_i A_i \otimes T_i) \neq 0$ and $\det(\sum_i A_i \otimes T'_i) \neq 0$.
    
\item[Step 3] $A$ and $B$ are $\SL_n \times \SL_n$-equivalent if and only if
$\det(\sum_i A_i \otimes T_i) = \det(\sum_i B_i \otimes T_i)$ and 
$\det(\sum_i A_i \otimes T'_i) = \det(\sum_i B_i \otimes T'_i)$.
\\
{\rm This holds by Proposition \ref{p:notnull}}.
\end{description}
\end{algorithm}

Finally, we give an algorithmic counterpart of Corollary \ref{c:lam}.

\begin{algorithm}\label{a:sl}
$\SL_p \times \SL_q$ equivalence in general.\\
\noindent{\bf Input:}  $A, B \in \Mat_{p,q}^m$.
\begin{description}
\setlength\itemsep{.5em}
\item[Step 1] Using \cite[Theorem 3.5]{BL08}, 
check whether $A$ and $B$ are $\GL_p\times\GL_q$-equivalent, and if so,
produce $(P,Q)\in \GL_p\times \GL_q$ such that $B=PAQ$.
If $A$ and $B$ are not $\GL_p\times \GL_q$-equivalent, 
then they are not $\SL_p\times \SL_q$-equivalent.
Otherwise, proceed to Step 2.

\item[Step 2] Check if the linear system $\sum_i A_iC_i=qI_p,\sum_i C_iA_i=pI_q$ in $C\in\Mat_{q,p}^m$ is consistent. If not, then $A$ and $B$ are $\SL_p \times \SL_q$-equivalent. Otherwise, proceed to Step 3.
\\
{\rm The validity of this step follows from Lemmas \ref{l:ab}, \ref{l:ab2}
and Proposition \ref{prop:dim-indec-summand}.
}

\item[Step 3] $A$ and $B$ are $\SL_p\times\SL_q$-equivalent if and only if 
$\det(P)\det(Q)=1$.
\\
{\rm 
Indeed, let $\lambda$ be an $\lcm(p,q)$\textsuperscript{th} root of $\det(P)\det(Q)$. Then $A$ and $B$ are $\SL_p\times \SL_q$-equivalent if and only if $A$ and $\lambda A$ are. By Lemma \ref{l:aa}, this is further equivalent to $\lambda^{\lcm(p,q)}=1$.
}
\end{description}
\end{algorithm}

\end{document}